\definecolor{Gray}{gray}{0.9}
\journalname{Journal of Scientific Computing}
\newcommand{\subparagraph}{}
\titlespacing\section{0pt}{12pt plus 4pt minus 2pt}{0pt plus 2pt minus 2pt}
\titlespacing\subsection{0pt}{12pt plus 4pt minus 2pt}{0pt plus 2pt minus 2pt}
\titlespacing\subsubsection{0pt}{12pt plus 4pt minus 2pt}{0pt plus 2pt minus 2pt}
\renewcommand{\arraystretch}{1.5}
\setlist[enumerate,1]{label=(\arabic*),ref=(\arabic*)}
\newcommand{\Rplus}{\protect\hspace{-.1em}\protect\raisebox{.35ex}{\smaller{\smaller\textbf{+}}}}
\newcommand{\Cpp}{\mbox{C\Rplus\Rplus}}
\newcommand{\mathd}{\mathrm{d}}
\newcommand{\R}{\mathbbm{R}}
\title{Fast Barycentric-Based Evaluation Over Spectral/{\em hp} Elements}
\author{Edward Laughton
\and Vidhi Zala 
\and Akil Narayan 
\and Robert M. Kirby 
\and David Moxey}
\institute{Edward Laughton \at College of Engineering, Mathematics and Physical Sciences, University of Exeter, Exeter, UK, \email{el326@exeter.ac.uk} 
\and Vidhi Zala \at Scientific Computing and Imaging Institute and School of Computing, University of Utah, Salt Lake City, UT 84112, \email{vidhi.zala@utah.edu} 
\and Akil Narayan \at Scientific Computing and Imaging Institute and Department of Mathematics, University of Utah, Salt Lake City, UT 84112, \email{akil@sci.utah.edu} 
\and Robert M. Kirby \at Scientific Computing and Imaging Institute and School of Computing, University of Utah, Salt Lake City, UT 84112, \email{kirby@cs.utah.edu} 
\and David Moxey \at Department of Engineering, King's College London, London, UK. \email{david.moxey@kcl.ac.uk}}
\newcommand{\ddx}[1]{\frac{\mathd}{\mathd #1}}
\newcommand{\pfpx}[2]{\frac{\partial #1}{\partial #2}}
\newcommand{\pnfpx}[3]{\frac{\partial^{#3} #1}{\partial #2}}
\newcommand{\N}{\mathbbm{N}}
\newcommand{\bs}[1]{\boldsymbol{#1}}
\renewcommand{\hat}[1]{\widehat{#1}}
\begin{document}

\maketitle
\begin{abstract}
  As the use of spectral/$hp$ element methods, and high-order finite element
  methods in general, continues to spread, community efforts to create
  efficient, optimized algorithms associated with fundamental high-order
  operations have grown.  Core tasks such as solution expansion evaluation at
  quadrature points, stiffness and mass matrix generation, and matrix assembly
  have received tremendous attention.  With the expansion of the types of
  problems to which high-order methods are applied, and correspondingly the
  growth in types of numerical tasks accomplished through high-order methods,
  the number and types of these core operations broaden.  This work focuses on
  solution expansion evaluation at arbitrary points within an element. This
  operation is core to many postprocessing applications such as evaluation of
  streamlines and pathlines, as well as to field projection techniques such as
  mortaring. We expand barycentric interpolation techniques developed on an
  interval to 2D (triangles and quadrilaterals) and 3D (tetrahedra, prisms,
  pyramids, and hexahedra) spectral/$hp$ element methods. We provide efficient
  algorithms for their implementations, and demonstrate their effectiveness
  using the spectral/$hp$ element library {\em Nektar++} by running a series of
  baseline evaluations against the `standard' Lagrangian method, where an
  interpolation matrix is generated and matrix-multiplication applied to
  evaluate a point at a given location. We present results from a rigorous
  series of benchmarking tests for a variety of element shapes, polynomial orders
  and dimensions. We show that when the point of interest is to be repeatedly
  evaluated, the barycentric method performs at worst $50\%$ slower, when
  compared to a cached matrix evaluation. However, when the point of interest
  changes repeatedly so that the interpolation matrix must be regenerated in the
  `standard' approach, the barycentric method yields far greater performance,
  with a minimum speedup factor of $7\times$. Furthermore, when derivatives of
  the solution evaluation are also required, the barycentric method in general
  slightly outperforms the cached interpolation matrix method across all
  elements and orders, with an up to $30\%$ speedup. Finally we investigate a
  real-world example of scalar transport using a non-conformal discontinuous
  Galerkin simulation, in which we observe around $6\times$ speedup in computational
  time for the barycentric method compared to the matrix-based approach. We also
  explore the complexity of both interpolation methods and show that the
  barycentric interpolation method requires $\mathcal{O}(k)$ storage compared to
  a best case space complexity of $\mathcal{O}(k^2)$ for the Lagrangian
  interpolation matrix method.
\end{abstract}

%
\begin{keywords}
 {high-order finite elements, spectral/$hp$ elements, point evaluation, barycentric interpolation}
 \end{keywords}

\section*{Declarations}
\subsection*{Funding}
The first and fifth authors acknowledge support from the EPSRC Platform Grant PRISM under grant EP/R029423/1 and the ELEMENT project under grant EP/V001345/1. 
The second and fourth authors acknowledge support from  ARO W911NF-15-1-0222  (Program Manager Dr. Mike Coyle). The third author acknowledges support from NSF DMS-1848508.
\subsection*{Conflicts of interest}
The authors declare no conflicts of interest.
\subsection*{Data and code availability}
All code is available in the \emph{Nektar++} repository at \url{https://gitlab.nektar.info}.
%
\section{Introduction}
\label{sec:introduction}

The application of high-order numerical methods continues to expand, now spanning aeronautics (e.g., \cite{lombard-2016a}) to biomedical engineering (e.g., \cite{chooi2016intimal}), in large part due to two factors:  
the numerical accuracy combined with low dissipation and dispersion errors they can obtain for certain problem classes \cite{vincent2014}, and 
the computational efficiency they can achieve when balancing approximation power against computational density \cite{Moxey2019}.  
Most high-order finite element, also called spectral/$hp$ element, simulation codes mimic their traditional finite element counterparts in terms of software organization such as the development of local operators that are then ``assembled'' (either in the strict continuous Galerkin sense or in the weak discontinuous Galerkin sense) into a global system that is advanced in some way \cite{DevilleHigh02,Karniadakis2005,Hesthaven07}. 
Tremendous effort has been expended to create optimized elemental operations that evaluate, for instance, the solution expansions at the quadrature points which allows for rapid evaluation of the solution at the points of integration needed for computing the stiffness matrix entries, the forcing terms, and other desirable quantities. 

However, in the case of history points (positions in the field at which one wants to track a particular quantity of interest over time) \cite{Sirisup05}, pathlines/streamlines \cite{steffen2008investigation}, isosurface evaluation, \cite{JallepalliLK} or refinement and mortaring \cite{Laughton21}, evaluation of solution expansions at arbitrary point locations is required. From the perspective of point evaluation over the entire domain, these operations require two phases: given a point in the domain, first finding the element in which that point resides, and then a fast evaluation of the solution expansion on an individual element. Optimization strategies such as octrees and R-trees have been implemented to accelerate the first of these two tasks (e.g., \cite{Jallepalli19}); however, a concerted effort has not been placed on generalized elemental operations such as arbitrary point evaluation.

The purpose of this work is to address the need for efficient arbitrary point evaluation in high-order (spectral/$hp$) element methods. We are building upon the barycentric polynomial interpolation work of Berrut and Trefethen \cite{Trefethen04}, which has been shown to have a myriad of applications within the polynomial approximation world \cite{Kelly17}. In this work, we mathematically generalize barycentric polynomial interpolation to arbitrary simplicial elements, and then demonstrate the effectiveness of this approach on the canonical high-order finite elements shapes in 2D (triangles and quadrilaterals) and 3D (tetrahedra, prisms, pyramids, and hexahedra). The operational efficiency range of our work is designed for polynomial degrees often run within the spectral/$hp$ community -- from polynomial degree $2$ to $10$. The algorithms presented herein are also implemented in the publicly available high-order finite element library {\em Nektar++} \cite{Cantwell2015,MOXEY2020107110}.

The paper proceeds as follows: In Sections \ref{sec:background} and \ref{sec:bary-tensor}, we lay out the mathematical details of our work. In Section \ref{sec:background}, we highlight the one-dimensional building blocks of barycentric interpolation and provide the mathematical framework for considering interpolation over finite element shapes generated through successive application of Duffy transformations of an orthotope element, and then in Section \ref{sec:bary-tensor} we provide details on expanding these ideas to both tensor-product constructed 2D and 3D elements as used in {\em Nektar++}. In Section \ref{sec:implementation}, we present both algorithmic and implementation details. We  provide details that facilitate reproducibility of our results as well as complexity and storage analysis of the proposed strategy. In Section \ref{sec:results}, we present various test cases that demonstrate the efficacy and efficiency of our proposed work, as well as a capstone example that highlights a real-world application of our strategy. We summarize our contributions and conclude with possible future work in Section \ref{sec:conclusions}.

%

\section{Univariate barycentric interpolation}
\label{sec:background}

Berrut and Trefethen \cite{Trefethen04} propose barycentric Lagrange interpolation for stable, efficient evaluation of polynomial interpolants. With $\eta \in [-1,1]$ the independent variable, let $\{z_j\}_{j=0}^k \subset [-1,1]$ denote $k+1$ unique nodes. Defining $Q_k$ as the space of polynomials of degree $k$ or less in one variable, then any $p \in Q_k$ can be represented in its barycentric form,
\begin{align}\label{eq:2}
  p(\eta) &= \frac{\sum\limits_{j=0}^k\frac{ w_j p_j }{(\eta - z_j)}}{\sum\limits_{j=0}^k\frac{w_j}{(\eta-z_j)}} \eqqcolon \frac{N(\eta)}{D(\eta)}, & 
  p_j &\coloneqq p(z_j),
\end{align}
where the weights $w_j$ are given by
\begin{equation}\label{eq:1}
    w_j = \frac{1}{\prod\limits_{{i = 0,i\ne j}}^{k} (z_i - z_j)} \hspace{1cm}  \forall j = 0,1,\cdots,k.
\end{equation}
%
%
The weights are independent of the polynomial $p$, and depend only on the nodal configuration. Barycentric form \eqref{eq:2} reveals that given the values $\{p_j\}_{j=0}^k$ of a polynomial, then evaluation of $p$ at an arbitrary location $\eta$ can be accomplished without solving linear systems or evaluating cardinal Lagrange interpolants. 

In this paper, we focus on the algorithmic advantages of using barycentric form, with proper extensions to some standard multivariate non-tensorial domains that are popular in high-order (spectral/$hp$) finite element methods. These algorithmic advantages stem from the univariate algorithmic advantages: The map $\eta \mapsto p(\eta)$ using the formula \eqref{eq:2} requires $\mathcal{O}(k)$ arithmetic operations, whereas the same map using standard linear expansions frequently requires $\mathcal{O}(k^2)$ operations.

Throughout our discussion, we consider the nodes $z_j$, and subsequently, through \eqref{eq:1}, the barycentric weights $w_j$, as given and fixed. To emphasize the $\mathcal{O}(k)$ complexity of the operations, which we consider in the following section, we define
\begin{align}\label{eq:S-def}
  S_r(\bs{v}, \eta) &\coloneqq \sum_{j=0}^k \frac{v_j w_j}{(\eta - z_j)^r}, & \bs{v} &= \left( v_0, \ldots, v_k \right)^T,
\end{align}
which, given $r, \bs{v}$, ostensibly requires only $\mathcal{O}(k)$ complexity to evaluate $\eta \mapsto S_r(\bs{v},\eta)$. Note in particular that $S_r(\bs{v},\eta)$ is linear in $\bs{v}$ and that 
\begin{align}
  \ddx{\eta} S_r(\bs{v},\eta) = -r S_{r+1}(\bs{v},\eta).
\end{align}
We can write \eqref{eq:2} as 
\begin{align}\label{eq:univariate-barycentric}
  p(\eta) &= \frac{S_1(\bs{p}, \eta)}{S_1(\bs{1}, \eta)}, & \bs{p} &= \left(p_0, \ldots, p_k\right)^T, & \bs{1} &\coloneqq \left(1, \ldots, 1\right)^T \in \R^n,
\end{align}
which clearly demonstrates the $\mathcal{O}(k)$ complexity if $\bs{p}$ is furnished. 

\subsection{Alternatives to barycentric form}
Consider a linear expansion representation of $p \in Q_k$, written in terms of either an (arbitrary) basis $\{\phi_j\}_{j=0}^k$ of $Q_k$, or the cardinal Lagrange functions of the interpolation points $\{z_j\}_{j=0}^k$:
\begin{align}\label{eq:p-expansion}
  p(\eta) &= \sum_{j=0}^k c_j \phi_j(\eta),
  & \mathrm{span}\{\phi_0, \ldots \phi_k \} &= Q_k, 
\end{align}
where the $\phi_j$ basis functions can be, e.g., cardinal Lagrange interpolants, monomials, or orthogonal polynomials:
\begin{subequations}\label{eq:phi-examples}
  \begin{align}\label{eq:phi-lagrange}
    \phi_j(\eta) &= \prod_{\substack{i = 0, \ldots, k\\i \neq j}} \frac{\eta - z_i}{z_j - z_i}, \\\label{eq:phi-monomial}
    \phi_j(\eta) &= \eta^{j-1}, \\\label{eq:phi-opoly}
  \phi_j(\eta) &= \psi_j(\eta),
\end{align}
\end{subequations}
with $\psi_j$ any orthogonal polynomial family, such as Legendre or Chebyshev polynomials. Our main focus is the computational complexity of evaluating a polynomial interpolant given the data values $\{p_j\}_{j=0}^k$. The following diagram summarizes the complexity of utilizing each of these procedures to accomplish the evaluation $\eta \mapsto p(\eta)$:
\begin{align}
  \begin{array}{rccccl}
    \textrm{Barycentric \eqref{eq:2}}: &  \{(z_j)\}_{j=0}^k, \eta & \xrightarrow{\mathcal{O}(k^2)}& \{(w_j,z_j,p_j)\}_{j=0}^k, \eta & \xrightarrow{\mathcal{O}(k)} & p(\eta) \\
    \textrm{Monomial \eqref{eq:p-expansion}, \eqref{eq:phi-monomial}}: & \{(z_j,p_j)\}_{j=0}^k, \eta & \xrightarrow{\mathcal{O}(k^3)} & \{(c_j)\}_{j=0}^k, \eta & \xrightarrow{\substack{\textrm{Naive: }\mathcal{O}(k^2)\\\textrm{Horner: }\mathcal{O}(k)}} & p(\eta) \\
    \textrm{Orth. Poly. \eqref{eq:p-expansion}, \eqref{eq:phi-opoly}}: & \{(z_j,p_j)\}_{j=0}^k, \eta & \xrightarrow{\mathcal{O}(k^3)} & \{(c_j)\}_{j=0}^k, \eta & \xrightarrow{\substack{\textrm{Naive: }\mathcal{O}(k^2)\\\textrm{Clenshaw: }\mathcal{O}(k)}} & p(\eta) \\
    \textrm{Lagrange \eqref{eq:p-expansion}, \eqref{eq:phi-lagrange}}: &                             &                                & \{(z_j, p_j)\}_{j=0}^k, \eta & \xrightarrow{\mathcal{O}(k^2)} & p(\eta) \\
  \end{array}
\end{align}
Above, we have alluded to the fact that direct evaluation of monomial or orthogonal polynomial $(k+1)$-term expansions appears to require $\mathcal{O}(k^2)$ complexity, but clever rearrangement of elements in the summation can lower this to $\mathcal{O}(k)$ in both cases, using either Horner's algorithm (monomials) or Clenshaw's algorithm \cite{clenshaw_note_1955} (orthogonal polynomials). Viewed in this way, there are two advantages to utilizing the barycentric form. The initial one-time computation for the barycentric weights is cheaper than for the monomials. Furthermore, with fixed nodal locations $z_j$, the weights $w_j$ need not be recomputed if $p$ is changed; in other words, the weights $w_j$ do not depend on $p$. Hence if $k$ and the nodes $z_j$ are given and fixed, then the weights $w_j$ can be precomputed and (re-)used for \textit{any} $p \in Q_k$.

Our first task in this paper is to generalize the barycentric procedure to first- and second-derivative.

\subsection{Barycentric evaluation of derivatives}\label{ssec:1d-derivative}
A formula for the derivative of a polynomial can be derived from the barycentric form \eqref{eq:2} that inherits its advantages. We first note two auxiliary expressions for the derivative of the numerator and denominator rational functions,
\begin{align}
  N'(\eta) &= -S_2(\bs{p}, \eta), & D'(\eta) &= -S_2(\bs{1}, \eta),
\end{align}
each of which ostensibly also requires only $\mathcal{O}(k)$ complexity to evaluate if the weights $w_j$ are precomputed. Then, by directly differentiating \eqref{eq:2}, we have
\begin{align}\label{eq:barydiff}
  p'(\eta) = \frac{N'(\eta) - p(\eta) D'(\eta)}{D(\eta)} = \frac{S_2\left(p(\eta) - \bs{p}, \eta\right)}{S_1(\bs{1},\eta)},
\end{align}
where the vector $p(\eta) - \bs{p}$ has entries $p(\eta) - p_j$. Therefore, this ``barycentric" form for $p'(\eta)$ requires (i) an evaluation of $p(\eta)$ that can be accomplished in $\mathcal{O}(k)$ complexity using \eqref{eq:2}, and (ii) an additional $\mathcal{O}(k)$ evaluation of the summation above. Thus, $\eta \mapsto p'(\eta)$ can be evaluated with only $\mathcal{O}(k)$ complexity.

A similar computation shows that 
\begin{align}\label{eq:barydiff2}
  p''(\eta) = \frac{2}{S_1(\bs{1},\eta)} \left( p'(\eta) S_2(\bs{1},\eta) - S_3(p(\eta) - \bs{p}, \eta) \right).
\end{align}
Again, since $\eta \mapsto p(\eta)$ and $\eta \mapsto p'(\eta)$ can be evaluated with $\mathcal{O}(k)$ effort through \eqref{eq:2} and \eqref{eq:barydiff}, some extra $\mathcal{O}(k)$ effort to evaluate $S_2$ and $S_3$ above yields an evaluation $\eta \mapsto p''(\eta)$ that can also be accomplished in $\mathcal{O}(k)$ time.

\section{Tensorization of the barycentric form}\label{sec:bary-tensor}

All the evaluations considered above can be generalized to tensorial formulations, which is the main topic of this section. To that end, we introduce some multidimensional notations. Let $\bs{\eta} \coloneqq(\eta_1, \ldots, \eta_d) \in [1,1]^d$ be a $d$-dimensional vector. Given some multi-index $\bs{k} \in \N_0^d$, we introduce the tensorial space of polynomials $Q_{\bs{k}}$ defined by $\bs{k}$:
\begin{align}
  Q_{\bs{k}} \coloneqq \mathrm{span} \left\{ \bs{\eta}^{\bs{j}} \;\big|\; \bs{j} \in \N_0^d \textrm{ and } \bs{j} \leq \bs{k} \right\},
\end{align}
where we have adopted the standard multi-index notation,
\begin{align}
  \bs{\eta}^{\bs{j}} &\coloneqq \prod_{q=1}^d \eta_q^{j_q}, & \bs{j} &= (j_1, \ldots, j_d) \in \N_0^d,
\end{align}
and $\bs{j} \leq \bs{k}$ is true if all the component-wise inequalities are true.

Fixing $\bs{k}$, we consider the case of representing an element $p$ of $Q_{\bs{k}}$ through its values on a discrete tensorial grid of size $\prod_{q=1}^d k_q$. Like the univariate case, linear expansions in cardinal Lagrange, monomial, and/or orthogonal polynomials are common, but we will exercise the barycentric form. Let a tensorial grid on a $[-1,1]^d$ orthotope be given, with $k_q+1$ points in direction $q$:
\begin{align}
  \left\{z_{j,q}\right\}_{j=0}^{k_q} \subset [-1,1], 
\end{align}
for $q = 1, \ldots, d$. The tensorization of these grids results in the multidimensional grid $\left\{ \bs{\eta}_{\bs{j}} \right\}_{\bs{j} \leq \bs{k}}$ defined by 
\begin{align}
  \bs{z}_{\bs{j}} \coloneqq \left( z_{j_1, 1}, \, z_{j_2, 2}, \, \ldots, \, z_{j_d,d} \right) \in [-1,1]^d.
\end{align}
Given this tensorial configuration of nodes, we define univariate barycentric weights associated with each dimension in a  fashion similar to \eqref{eq:1},
\begin{align}
  w_{q,\ell} &\coloneqq \frac{1}{\prod\limits_{{i = 0,i\ne j}}^{k} (z_{i,q} - z_{j,q})}, & 
  0 &\leq j \leq k_q, &
  1 &\leq q \leq d.
\end{align}
Then, given the data
\begin{align}
  \left\{p_{\bs{j}} \right\}_{\bs{j} \leq \bs{k}}, \hskip 15pt p_{\bs{j}} \coloneqq p\left(\bs{z}_{\bs{j}}\right),
\end{align}
for $p \in Q_{\bs{k}}$, then the multidimensional barycentric form of $p$ is 
\begin{align}\label{eq:bary-multidim}
  p(\bs{\eta}) &= \frac{\sum_{\bs{j} \leq \bs{k}} \frac{p_{\bs{j}} w_{\bs{j}}}{\odot \left( \bs{\eta} - \bs{z}_{\bs{j}}\right)}}
                      {\sum_{\bs{j} \leq \bs{k}} \frac{w_{\bs{j}}}{\odot \left( \bs{\eta} - \bs{z}_{\bs{j}}\right)}}, &
  \odot \left(\bs{\eta} - \bs{z}_{\bs{j}} \right) &\coloneqq \prod_{q=1}^d \left(\eta_q - z_{j_q, q} \right), & 
  w_{\bs{j}} &\coloneqq \prod_{q=1}^d w_{q,j_q}.
\end{align}
Given $\bs{y}\in [-1,1]$, an evaluation of $p(\bs{y})$ above requires $\mathcal{O}\left(\prod_{q=1}^d k_q \right)$ operations, corresponding to the complexity of the summations.

\subsection{Dimension-by-dimension approach}\label{ssec:d-by-d}
Instead of the direct approach \eqref{eq:bary-multidim} for evaluating $p$, we utilize a dimension-by-dimension computation that is slightly more computationally expensive but allows us to directly leverage univariate procedures, greatly simplifying the software implementation. First, consider the functions $\widetilde{p}_q$, for $q = d-1, \ldots, 1$, each of which is a function of $\eta_1, \ldots, \eta_q$ formed by freezing $\eta_{s} = y_{s}$ for $s > q$:
\begin{align}
  \widetilde{p}_d &\coloneqq p, & \widetilde{p}_q(\eta_1, \ldots, \eta_q) &\coloneqq p\left(\eta_1, \ldots, \eta_q, y_{q+1}, \ldots, y_d \right) = \widetilde{p}_{q+1}(\eta_1, \ldots, \eta_q, y_{q+1}).
\end{align}
In order to evaluate $p(\bs{y})$, we will proceed by iteratively constructing $\widetilde{p}_q$ from $\widetilde{p}_{q+1}$ for  $q = d-1, \ldots, 1$. Via barycentric form, ``constructing" $\widetilde{p}_q$ amounts to evaluating this function on the $q$-dimensional tensorial grid 
\begin{align}
  \widetilde{\bs{z}}_{q,\bs{j}} = \left( z_{j_1, 1}, \, z_{j_2, 2}, \, \ldots, \, z_{j_q,q} \right) \in [-1,1]^q.
\end{align}
Then, for example, to first generate $\widetilde{p}_{d-1}$, we must evaluate
\begin{align}
  \widetilde{p}_{d-1}\left(\widetilde{\bs{z}}_{d-1,\bs{j}}\right) = p\left(z_{j_1,1}, \ldots, z_{j_{d-1},d-1}, y_{d}\right),
\end{align}
for every $\bs{j} \leq \left(k_1, \ldots, k_{d-1}\right)$. This can be accomplished via univariate procedures in $\mathcal{O}(k_d)$ complexity since, for each fixed $\bs{j}$,
\begin{align}
  y_d \mapsto p\left(z_{j_1,1}, \ldots, z_{j_{d-1},d-1}, y_{d}\right),
\end{align}
is a polynomial of degree $k_d$, and hence obeys the barycentric formula,
\begin{align}
  p\left(z_{j_1,1}, \ldots, z_{j_{d-1},d-1}, y_{d}\right) &= \frac{S_1\left(\widetilde{\bs{p}}_{\bs{j}}, y_d\right)}{S_1\left(\bs{1}, y_d\right)}, & 
  \widetilde{\bs{p}}_{\bs{j}} \coloneqq \left( p_{(j_1, \ldots, j_{d-1}, \ell)} \right)_{\ell=1}^{k_d}.
\end{align}
In this way, we proceed to iteratively construct $\widetilde{p}_q$, amounting to $\prod_{j=1}^{q} k_j$ evaluations, each of computational complexity $\mathcal{O}(k_{q+1})$,
\begin{align}\label{eq:dim-by-dim}
  \left.
  \begin{array}{ccl}
    p & \xrightarrow{\textrm{$\prod_{j=1}^{d-1} k_j$ $\mathcal{O}(k_d)$ evaluations}} & \widetilde{p}_{d-1} \\
    \widetilde{p}_q & \xrightarrow{\textrm{$\prod_{j=1}^{q-1} k_j$ $\mathcal{O}(k_q)$ evaluations}} & \widetilde{p}_{q-1} \hskip 10pt (2 < q < d) \\ 
    \widetilde{p}_1 & \xrightarrow{\textrm{1 $\mathcal{O}(k_1)$ evaluation}} & p(\bs{y})  
  \end{array}
  \right\} 
\end{align}
In summary, computing $\bs{y} \mapsto p(\bs{y})$ can be accomplished with the procedure above, which entails repeated use of the univariate barycentric form \eqref{eq:univariate-barycentric}. 

As mentioned earlier, the cost of the procedure \eqref{eq:dim-by-dim} is slightly more expensive than direct evaluation of the multidimensional barycentric form \eqref{eq:bary-multidim}. In particular, the ($\bs{k}$-asymptotic) cost of the direct evaluation \eqref{eq:bary-multidim} is $\prod_{q=1}^d k_q$. On the other hand, the dimension-by-dimension approach \eqref{eq:dim-by-dim} incurs additional lower-order costs, and has complexity scaling as,
\begin{align}
  \prod_{q=1}^d k_q + \prod_{q=1}^{d-1} k_q + \ldots = \sum_{j=1}^d \prod_{q=1}^j k_q \leq d \prod_{q=1}^d k_q,
\end{align}
where the inequality is a very crude bound.  Thus, while the algorithm described by \eqref{eq:dim-by-dim} is formally more expensive than direct evaluation \eqref{eq:bary-multidim}, the actual additional cost is relatively small.  In particular, for the physically relevant cases of $d = 2, 3$, this minor increase in cost is acceptable for the achieved gain in implementation ease. 


\subsection{Tensorial functions}
We end this section with a brief remark on a direct simplification that can be employed in the special case that one has prior knowledge that the polynomial $p$ is tensorial, i.e., of the form,
\begin{align}
  p(\bs{\eta}) = \prod_{j=1}^d p_j(\eta_j),
\end{align}
for some univariate polynomials $\{p_j\}_{j=1}^d$ satisfying $\deg p_j \leq k_j$. In many high-order FEM simulations, basis functions in $\bs{\eta}$ space are often tensorial polynomials, so that this situation does occur in practice. In this tensorial case, computing the barycentric weights is simpler since we need to only compute the $k_j$ \textit{univariate} weights for dimension $j$ associated with the grid $z_{q,j}$ for $q \in [k_j]$. Thus, we need to only compute $\sum_{j =1}^d k_j$ weights, as opposed to the full set of $\prod_{j=1}^d k_j$ multivariate weights associated with \eqref{eq:bary-multidim}.

Evaluating $\bs{\eta} \mapsto p(\bs{\eta})$ is likewise faster in this case: once the univariate weights are computed, then each univariate barycentric evaluation $\eta_j \mapsto p_j(\eta_j)$ requires $\mathcal{O}(k_j)$ complexity. Therefore, $\bs{\eta} \mapsto p(\bs{\eta})$ requires only $\mathcal{O}(\sum_{j=1}^d k_j)$ complexity, as opposed to the full multivariate $\mathcal{O}(\prod_{j=1}^d k_j)$ complexity.

\subsection{Derivatives}\label{ssec:multid-deriv}
Section \ref{ssec:d-by-d} discusses how we accomplish the evaluation $\bs{\eta} \mapsto p(\bs{\eta})$ algorithmically by iteratively evaluating along each dimension. This procedure is directly extensible to evaluation of (Cartesian) partial derivatives. For example, if $p \in Q_{\bs{k}}$, suppose for some multi-index $\bs{\lambda} \in \N_0^d$ we wish to evaluate the order-$\bs{\lambda}$ derivative,
\begin{align}
  p^{(\bs{\lambda})} &= \frac{\partial^{|\bs{\lambda}|} p}{\partial \bs{\eta}^{\bs{\lambda}}}, & \partial \bs{\eta}^{\bs{\lambda}} &= \partial x_1^{\lambda_1} \partial x_2^{\lambda_2} \cdots \partial x_d^{\lambda_d}.
\end{align}
We are mostly concerned with $|\bs{\lambda}| \leq 2$, i.e., at most ``second" derivatives, but the procedure we describe applies to derivatives of arbitrary order. The dimension-by-dimension approach can be accomplished with essentially the same procedure as articulated in Section \ref{ssec:d-by-d}: Define
\begin{align}
\begin{split}
  \widetilde{p}_d &\coloneqq p, \\
  \widetilde{p}_q(\eta_1, \ldots, \eta_q) &\coloneqq \frac{\partial \widetilde{p}_{q+1}}{\partial \eta_{q+1}^{\lambda_{q+1}}}(\eta_1, \ldots, \eta_q, y_{q+1}),
\end{split}
\end{align}
so that constructing $\widetilde{p}_q$ from $\widetilde{p}_{q+1}$ at a single grid point requires evaluation of the order-$\lambda_{q+1}$ derivative along dimension $q+1$. Through procedures outlined in section \ref{ssec:1d-derivative}, we can accomplish this in $\mathcal{O}(k_{q+1})$ complexity. We must evaluate the derivative at each of the $\prod_{j=1}^{q} k_j$ grid points associated with dimensions $1, \ldots, q$. Therefore, the outline and complexity of this procedure is precisely as given in \eqref{eq:dim-by-dim}, except that $p(\bs{y})$ should be replaced by $p^{(\bs{\lambda})}(\bs{y})$.

\section{Nontensorial multidimensional formulations via Duffy transformations}
\label{sec:duffy}

The goal of this section is to describe how barycentric interpolation in the tensorial case over $d$-dimensional orthotopes of Section \ref{sec:bary-tensor} can be utilized for efficient evaluation of polynomial approximations in certain nontensorial cases. We focus on (potentially) non-tensorial polynomial approximations in a $d$-dimensional variable $\bs{\xi}$. The variable $\bs{\xi}$ will be related to the tensorial variable $\bs{\eta}$ through ``collapsed coordinates" effected by a Duffy transformation, as described in Section \ref{ssec:collapsed}. The transformation can be applied to a variety of common FEM element types, see Table \ref{tab:elements}. A description of how barycentric evaluation procedures can be used to evaluate polynomials on these potentially nontensorial geometries is given in Section \ref{ssec:poly-eval}; specifically, the procedure is given by \eqref{eq:map-to-eta}. That section also gives precise conditions to which polynomials space $p$ must belong so that the evaluation is exact (Theorem \ref{thm:bary-eval}). Section \ref{ssec:specializations} specializes the evaluation exactness conditions to common element types used in the spectral/$hp$ community. Section \ref{ssec:multid-deriv} closes the section by discussing extension of the evaluation routines to derivative evaluations through use of the chain rule.

\subsection{Collapsed coordinates}\label{ssec:collapsed} 
We consider polynomials in $d$ variables $\bs{\xi}$. The particular type of $\bs{\xi}$-polynomials we consider are defined via a mapping from $\bs{\eta}$ space to $\bs{\xi}$ space, where $\bs{\eta} \in [-1,1]^d$ is the tensorial variable considered in Section \ref{sec:bary-tensor}. The essential building block that allows us to specify the $\bs{\eta} \leftrightarrow \bs{\xi}$ relationship is the Duffy transformation, which is a variable transformation in two dimensions. For $\bs{\eta} \in [-1,1]^d$ and some fixed $i, j \in \{1, \ldots, d\}$ with $i \neq j$, define $D_{i,j}$ as the Duffy transformation that ``collapses" dimension $i$ with respect to or along dimension $j$ and is the identity map on all the other dimensions,
\begin{align}\label{eq:duffy}
  \bs{\zeta} = (\zeta_1, \ldots, \zeta_d) &\coloneqq D_{i,j}(\bs{\eta}),  &
  \zeta_\ell &= { \left\{\begin{array}{ll}
    \frac{1}{2} \left( 1 + \eta_\ell\right) \left(1 - \eta_j \right) - 1, & \ell = i, \\
  \eta_\ell, & \ell \neq i \end{array}\right.},
\end{align}
for $\ell = 1, \ldots, d$. 

\begin{figure}[htbp]
  \centering
    \resizebox{0.9\textwidth}{!}{
      \includegraphics[width=\textwidth]{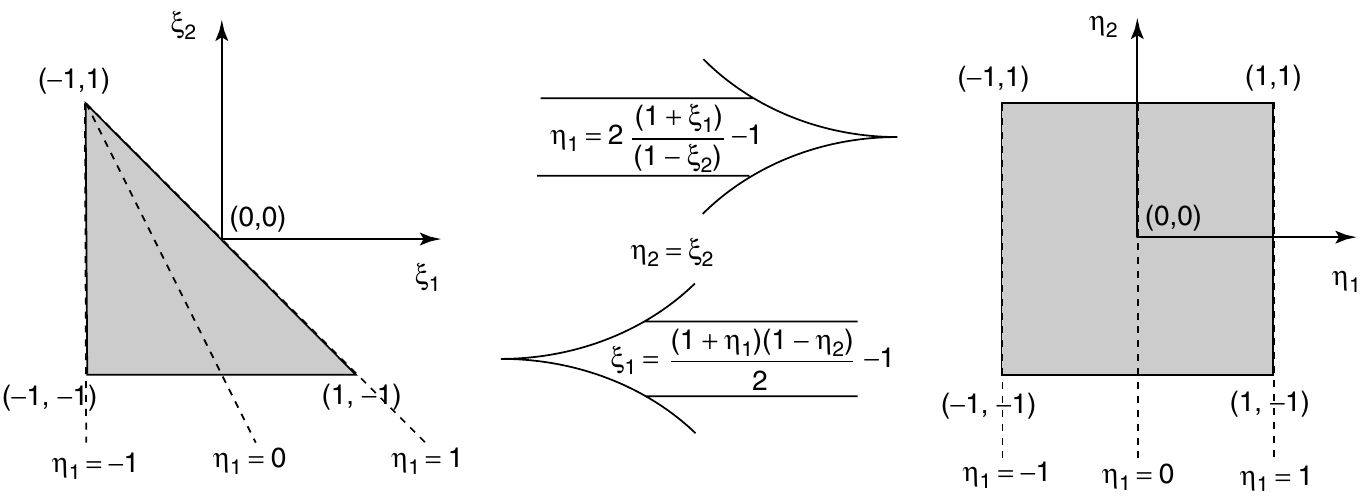}
    }
  
  \caption{Duffy transformations between triangles and quadrilateral reference elements.} 
   \label{fig:duffyColl}
\end{figure}

Various domains in $d$ dimensions can be created by composing Duffy maps. For composing $c < d$ maps, we let $\bs{a} \in [d]^c$ have components that represent the dimensions that are collapsed by a Duffy transformation, and let $\bs{b} \in [d]^c$ have components specifying along which dimensions the collapse occurs. We place the following restrictions on the entries of $\bs{a}$ and $\bs{b}$:
\begin{itemize}
  \item $a_\ell < b_\ell$ for $\ell = 1, \ldots, c$
  \item $a_\ell < b_\ell$ for $\ell = 1, \ldots, c$
  \item $a_\ell < a_{\ell+1}$ for $\ell = 1, \ldots, c-1$.
\end{itemize}
We now define the variable $\bs{\xi}$ as the image of $\bs{\eta}$ under a composition of Duffy transformations defined by $\bs{a}$ and $\bs{b}$,
\begin{align}\label{eq:xi-def}
  \bs{\xi} &\coloneqq D_{\bs{a}, \bs{b}} \coloneqq D_{a_1, b_1} \circ D_{a_{2}, b_{2}} \circ \cdots \circ D_{a_c,b_c} (\bs{\eta}), &
  E(\bs{a}, \bs{b}) &\coloneqq D_{\bs{a}, \bs{b}}\left([-1,1]^d \right).
\end{align}
We are interested primarily in these domains for dimensions $d = 2, 3$. Table \ref{tab:elements} illustrates various standard geometric domains that are the result of particular choices of coordinate collapses.
\begin{table}[h!]
  \begin{center}
  \resizebox{0.95\textwidth}{!}{
    \renewcommand{\tabcolsep}{0.4cm}
    \renewcommand{\arraystretch}{1.3}
    {\scriptsize
      \begin{tabular}{@{}cp{0.3\textwidth}@{}ccc@{}}
      \toprule
                             & Geometric region $E_{\bs{a},\bs{b}}$ & $c$ & $\left\{(a_j, b_j) \right\}_{j=1}^c$ & $g = g_{\bs{a},\bs{b}}$ evaluations\\\midrule 
        \multirow{2}{*}{$d=2$} & Quadrilateral & 0 & --- & $g(1) = \{1\}$, $g(2) = \{2\}$ \\
                             & Triangle & 1 & $(1,2)$  & $g(1) = \{1\}$, $g(2) = \{1, 2\}$ \\
      \rowcolor{Gray}
                             & Hexahedron & 0 & --- & $g(1) = \{1\}$, $g(2) = \{2\}$, $g(3) = \{3\}$ \\
      \rowcolor{Gray}
                             & Prism & 1 & $(1,2)$ & $g(1) = \{1\}$, $g(2) = \{1, 2\}$, $g(3) = \{3\}$ \\
      \rowcolor{Gray}
                             & Tetrahedron & 2 & $(1,2), (2,3)$ & $g(1) = \{1\}$, $g(2) = \{1, 2\}$, $g(3) = \{1, 2, 3\}$ \\
      \rowcolor{Gray}
        \multirow{-4}{*}{$d=3$} & Pyramid & 2 & $(1,3), (2,3)$ & $g(1) = \{1\}$, $g(2) = \{2\}$, $g(3) = \{1, 2, 3 \}$ \\
    \bottomrule
    \end{tabular}
  }
    \renewcommand{\arraystretch}{1}
    \renewcommand{\tabcolsep}{12pt}
  }
  \end{center}
  \caption{Multidimensional domains resulting from collapsed coordinates, multi-indices $\bs{a}$ and $\bs{b}$ identifying the associated multivariate Duffy map, and ancestor functions $g_{\bs{a},\bs{b}}$ defined in \eqref{eq:g-def}.}\label{tab:elements}
\end{table}
A visual example with $d=2$ is also shown in Fig.~\ref{fig:duffyColl} which gives the Duffy transformations between reference triangles and reference quadrilaterals. 

\subsection{Evaluation of polynomials}\label{ssec:poly-eval}
The previous section illustrates how various standard domains that are used to tesselate space in finite element simulations are constructed. This section considers how we can employ barycentric evaluation in $\bs{\eta}$ space to accomplish evaluation of polynomials in $\bs{\xi}$ space. In what follows we assume that the dimension $d$, the grid size multi-index $\bs{k}$, and the Duffy transformation parameters $c$, $\bs{a}$, and $\bs{b}$ are all given and fixed.

Recall that on the tensorial domain $\bs{\eta} \in [-1,1]^d$ we have a tensorial grid $Z_{\bs{k}}$ comprised of $k_q$ points in dimension $q$, resulting in a total of $\prod_{q=1}^d k_q$ points in $Z_{\bs{k}}$. The image of this grid in $\bs{\xi}$ space is the result of applying the Duffy transformation:
\begin{subequations}\label{eq:p-xi-evaluations}
\begin{align}
  \bs{y}_j &\coloneqq D_{\bs{a},\bs{b}}\left(\bs{z}_{\bs{j}} \right), & \bs{j} &\leq \bs{k}.
\end{align}
Assume that data values are furnished from a given function $p$,
\begin{align}
  P_{\bs{k}} &\coloneqq \left( p_{\bs{j}} \right)_{\bs{j} \leq \bs{k}}, & p_{\bs{j}} &\coloneqq p\left(\bs{y}_{\bs{j}}\right), 
\end{align}
\end{subequations}
and are provided on the grid $\bs{y}_{\bs{j}}$. Naturally, these values can be considered as data values in $\bs{\eta}$ space under the (inverse) Duffy transformation,
\begin{align*}
 \left\{ \left( \bs{z}_{\bs{j}}, p_{\bs{j}} \right) \right\}_{\bs{j} \leq \bs{k}},
\end{align*}
and therefore the barycentric routines developed in tensorial form in Section \ref{sec:bary-tensor} can be applied. The main result of this section is the provision of conditions on $p$ in $\bs{\xi}$ space under which applying the tensorial barycentric interpolation procedure in $\bs{\eta}$ space results in an exact evaluation. More precisely, we consider the following algorithmic set of steps given a point $\bs{\xi}$ in $E(\bs{a}, \bs{b})$, and a function $p$:
\begin{align}\label{eq:map-to-eta}
  \left( \bs{\xi}, p \right) \xrightarrow{\eqref{eq:p-xi-evaluations}} \left( \bs{\xi}, Z_{\bs{k}}, P_{\bs{k}} \right) \xrightarrow{\bs{\eta} = D_{\bs{a},\bs{b}}^{-1}(\bs{\xi})} \left( \bs{\eta}, Z_{\bs{k}}, P_{\bs{k}} \right) \xrightarrow{\textrm{Section \ref{sec:bary-tensor}}} p(\bs{\xi}).
\end{align}
Thus, our main result below gives conditions on $p$ so that the output on the right of \eqref{eq:map-to-eta} equals the correct evaluation $p(\bs{\xi})$.
To proceed, we need a more involved deconstruction of the element identifier multi-indices $\bs{a}$ and $\bs{b}$. The particular rules in Section \ref{ssec:collapsed} that define the possible values of $\bs{a}$ and $\bs{b}$ ensure that a collection of tree structures can be constructed from $\bs{a}$ and $\bs{b}$. Let each dimension $1, 2, \ldots, d$, correspond to a node. The directed edges correspond to drawing an arrow from node $b_j$ ending at node $a_j$ for each $j = 1, \ldots, c$. Since the indices $\{a_j\}_{j=1}^c$ are all distinct, one arrow at most lands at each node, and therefore this construction forms a collection of trees. With this structure, we now define an `ancestor function' on the set of dimensions, which identifies which indices are ancestors of any dimension,
\begin{align}\label{eq:g-def}
  g_{\bs{a},\bs{b}}&: [d] \rightarrow 2^{[d]}, & g_{\bs{a},\bs{b}}(q) &\coloneqq \{q\} \bigcup \left\{ i \in [d] \;\big|\; i \textrm{ is an ancestor of } q \right\},
\end{align}
where $2^{[d]}$ denotes the power set (set of subsets) of $[d]$. Note that we have also included the index $q$ in $g_{\bs{a},\bs{b}}(q)$, so that $g_{\bs{a},\bs{b}}(q)$ is always non-empty.
The identification of $\bs{a}$ and $\bs{b}$ for typical geometries in $d = 2, 3$ is given in Table \ref{tab:elements}.
Finally, through the identification of the relation $g_T$, we can articulate which functions in $\bs{\xi}$ space are exactly evaluated via the barycentric form in $\bs{\eta}$ space.
\begin{theorem}\label{thm:bary-eval}
  With $d$, $\bs{k}$, $c$, $\bs{a}$, and $\bs{b}$ all given and fixed, define the following multi-index set:
  \begin{align}\label{eq:A-def}
    A \coloneqq \left\{ \bs{\alpha} \in \N_0^d \;\big|\; \sum_{j \in g_{\bs{a},\bs{b}}(q)} \alpha_j \leq k_q \textrm{ for every } q \in [d] \right\},
  \end{align}
   which defines a polynomial space,
  \begin{align}\label{eq:P-def}
    P = P(A) \coloneqq \left\{ \bs{\xi}^{\bs{\alpha}} \;\big|\; \bs{\alpha} \in A \right\} \subset Q_{\bs{k}}.
  \end{align}
  Then, for every $p \in P$ (a polynomial in $\bs{\xi}$ space), the procedure in \eqref{eq:map-to-eta} that utilizes the barycentric evaluation algorithm of Section \ref{sec:bary-tensor} exactly evaluates $p(\xi)$.
\end{theorem}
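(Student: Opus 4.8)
The plan is to recast the claim as an exactness-of-interpolation statement and then reduce it to a purely combinatorial property of the composed Duffy map. First I would observe that the tensorial barycentric algorithm of Section~\ref{sec:bary-tensor}, fed the data $\{(\bs{z}_{\bs{j}}, p_{\bs{j}})\}_{\bs{j}\le\bs{k}}$, evaluates nothing other than the unique tensor-product interpolant $\widetilde{p}\in Q_{\bs{k}}$ determined by $\widetilde{p}(\bs{z}_{\bs{j}})=p_{\bs{j}}$; this is because the univariate barycentric formula \eqref{eq:univariate-barycentric} reproduces every polynomial of degree at most $k_q$ from its values at the $k_q+1$ nodes in dimension $q$, and the dimension-by-dimension construction \eqref{eq:dim-by-dim} tensorizes this reproduction property. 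The output of \eqref{eq:map-to-eta} is therefore $\widetilde{p}(\bs{\eta})$ with $\bs{\eta}=D_{\bs{a},\bs{b}}^{-1}(\bs{\xi})$.

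Next I would introduce the pulled-back function $\widehat{p}\coloneqq p\circ D_{\bs{a},\bs{b}}$, viewed as a function of $\bs{\eta}$. By construction $\widehat{p}(\bs{z}_{\bs{j}})=p(\bs{y}_{\bs{j}})=p_{\bs{j}}$, so $\widehat{p}$ interpolates the very same data. Hence, if I can prove the single implication $p\in P(A)\Rightarrow \widehat{p}\in Q_{\bs{k}}$, then uniqueness of the tensor-product interpolant forces $\widetilde{p}=\widehat{p}$, and the output becomes $\widehat{p}(\bs{\eta})=p\big(D_{\bs{a},\bs{b}}(\bs{\eta})\big)=p(\bs{\xi})$, which is exactly the assertion. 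Everything thus collapses to showing $\widehat{p}\in Q_{\bs{k}}$, and since $p\mapsto\widehat{p}$ is linear it is enough to treat a monomial $p=\bs{\xi}^{\bs{\alpha}}$ with $\bs{\alpha}\in A$, for which $\widehat{p}(\bs{\eta})=\prod_{j=1}^d \xi_j(\bs{\eta})^{\alpha_j}$ with $\xi_j(\bs{\eta})\coloneqq [D_{\bs{a},\bs{b}}(\bs{\eta})]_j$.

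The technical core is a structural lemma on the coordinate functions $\xi_j$: each $\xi_j$ has degree at most one in every variable $\eta_q$, and it depends on $\eta_q$ only when $j\in g_{\bs{a},\bs{b}}(q)$ (equivalently, only when $q$ is $j$ or a descendant of $j$ in the ancestor relation of Section~\ref{ssec:poly-eval}). Granting this, subadditivity of degree gives $\deg_{\eta_q}\widehat{p}\le\sum_{j}\alpha_j\deg_{\eta_q}\xi_j\le\sum_{j\in g_{\bs{a},\bs{b}}(q)}\alpha_j$, and the defining inequality of $A$ in \eqref{eq:A-def} bounds the right-hand side by $k_q$ for every $q$; thus $\widehat{p}\in Q_{\bs{k}}$ and we are done. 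I would prove the lemma by induction over the composition, writing $\bs{\zeta}^{[c]}=\bs{\eta}$ and $\bs{\zeta}^{[\ell-1]}=D_{a_\ell,b_\ell}(\bs{\zeta}^{[\ell]})$, and tracking for each coordinate $m$ the set $V_m^{[\ell]}$ of variables on which $\zeta_m^{[\ell]}$ depends, the only nontrivial update being $V_{a_\ell}^{[\ell-1]}=V_{a_\ell}^{[\ell]}\cup V_{b_\ell}^{[\ell]}$ coming from \eqref{eq:duffy}. Two bookkeeping facts drive the induction: because the $a_\ell$ are distinct, coordinate $a_\ell$ is untouched before step $\ell$, so $V_{a_\ell}^{[\ell]}=\{a_\ell\}$; and because every edge satisfies $a_\ell<b_\ell$, the ancestor relation is acyclic with $a_\ell$ an ancestor of $b_\ell$, so $a_\ell$ cannot lie in $V_{b_\ell}^{[\ell]}\subseteq\{q:q=b_\ell \text{ or } b_\ell \text{ is an ancestor of }q\}$. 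The two sets multiplied at step $\ell$ are therefore disjoint, which is precisely what keeps the product multilinear (no $\eta_q$ is ever squared) and simultaneously propagates the containment $V_{a_\ell}^{[\ell-1]}\subseteq\{q:q=a_\ell \text{ or } a_\ell \text{ is an ancestor of } q\}$.

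The main obstacle is this disjointness/containment step: it is where the otherwise opaque restrictions on $\bs{a}$ and $\bs{b}$ in Section~\ref{ssec:collapsed}, together with the order in which the Duffy maps are composed, are exactly what is required. Once disjointness is secured the multilinearity and the dependency pattern follow immediately, and the degree count finishes the argument. As a consistency check I would verify the bookkeeping directly on the triangle and tetrahedron (and the multi-parent pyramid) rows of Table~\ref{tab:elements}, where one computes $\deg_{\eta_q}\widehat{\bs{\xi}^{\bs{\alpha}}}=\sum_{j\in g_{\bs{a},\bs{b}}(q)}\alpha_j$ by hand and matches it against \eqref{eq:A-def}.
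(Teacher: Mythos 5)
Your proof is correct, and its outer skeleton is the same as the paper's: reduce exactness of the procedure \eqref{eq:map-to-eta} to the single claim that the pullback $\hat{p} \coloneqq p \circ D_{\bs{a},\bs{b}}$ lies in $Q_{\bs{k}}$, then verify that claim by showing $\deg_{\eta_q} \hat{p} \leq \sum_{j \in g_{\bs{a},\bs{b}}(q)} \alpha_j \leq k_q$. The difference is in how the degree bound is certified, and in what is made explicit. The paper states a per-map degree-propagation rule ($\deg_j(f \circ D_{i,j}) = \deg_i(f) + \deg_j(f)$, all other coordinate degrees unchanged) and iterates it through the composition, reading off the total via the ancestor function; you instead prove a structural lemma about the composed map itself --- each coordinate function $\xi_j(\bs{\eta})$ is multilinear and involves $\eta_q$ only when $j \in g_{\bs{a},\bs{b}}(q)$ --- by induction on the composition with dependency sets, finishing by subadditivity of degree on $\prod_j \xi_j^{\alpha_j}$. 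The combinatorics encoded is identical (the tree built from $\bs{a}$, $\bs{b}$), but your bookkeeping pins down exactly where the restrictions on $\bs{a}$ and $\bs{b}$ enter: distinctness of the $a_\ell$ gives $V_{a_\ell}^{[\ell]} = \{a_\ell\}$, and $a_\ell < b_\ell$ gives the disjointness that keeps every coordinate multilinear, a point the paper's terser iteration leaves implicit. You also fill in two steps the paper asserts in passing: that the tensorial barycentric algorithm computes the unique interpolant in $Q_{\bs{k}}$ (so $\hat{p} \in Q_{\bs{k}}$ plus agreement on the grid forces the output to equal $\hat{p}(\bs{\eta}) = p(\bs{\xi})$), and the reduction to monomials by linearity --- the latter matters if one reads $P$ as a span, since the coordinate-degree vector of a sum of monomials with exponents in $A$ need not itself lie in $A$, so the paper's phrase ``let $\bs{\alpha}$ denote the degree of $p$'' is only literally sound monomial by monomial. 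Net effect: same strategy and same use of the ancestor function, with your version more self-contained about interpolation uniqueness and the role of the ordering hypotheses.
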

\begin{proof}
  Let $p \in P$, and let $\bs{\alpha} \in \N_0^d$ denote the degree of $p$, i.e., the polynomial degree of $p$ in dimension $q$ is $\alpha_q$. Since $p(\bs{\xi}) = p(D_{\bs{a},\bs{b}}(\bs{\eta}))$, then the result is proven if we can show that $\hat{p} \coloneqq p \circ D_{\bs{a},\bs{b}} \in Q_{\bs{k}}$, since the barycentric form in $\bs{\eta}$ space is exact on this space of polynomials. Note that each Duffy transformatiom $D_{\bs{a}, \bs{b}}$ defined through \eqref{eq:duffy} and \eqref{eq:xi-def} is a (multivariate) polynomial, so that $\hat{p} = p \circ D_{\bs{a},\bs{b}}$ is a polynomial, and we need to show that only its maximum degree in dimension $q$ is less than or equal to $k_q$ for every $q = 1, \ldots, d$. 

  Fixing $q \in [d]$, the degree of $\hat{p}$ in dimension $q$ is discernible from the ancestor function $g_{\bs{a},\bs{b}}$. Let $\deg_q(f)$ denote the dimension-$q$ degree of a polynomial $f$. Then the Duffy transformation definition \eqref{eq:duffy} implies that for any two distinct dimensions $i$, $j$,
  \begin{align}
    \deg_j \left(D_{i,j} \circ f \right) &= \deg_i(f) + \deg_j(f), &
    \deg_q \left(D_{i,j} \circ f \right) &= \deg_q(f), & q &\in [d] \backslash \{i\}.
  \end{align}
  Since $D_{\bs{a},\bs{b}}$ is a composition of univariate Duffy maps, the degree of $\hat{p}$ along any dimension $q$ can be determined by tracing the history of which dimensions collapse onto $q$, i.e., is determined by $g_{\bs{a},\bs{b}}(q)$. Thus, 
  \begin{align}
    \deg_q \left( \hat{p} \right) = \deg_q(p) + \sum_{i \textrm{ is an ancestor of } q} \deg_i(p) = \sum_{i \in g_{\bs{a},\bs{b}}(q)} \deg_i(p) = \sum_{i \in g_{\bs{a},\bs{b}}(q)} \alpha_i.
  \end{align}
  By assumption on the index set $A$ to which $\bs{\alpha}$ belongs, this last term is bounded by $k_q$.
%
\end{proof}
Given a tensorial grid in $Z_{\bs{k}}$ in $\bs{\eta}$ space, Theorem \ref{thm:bary-eval} precisely describes what type of $\bs{\xi}$-polynomial space membership $p$ should have so that the procedure \eqref{eq:map-to-eta} exactly evaluates $p$.

\subsection{Specializations}\label{ssec:specializations}

This section describes certain specializations of the apparatus in the previous section. Our specializations will be the two- and three-dimensional domains shown in Table \ref{tab:elements}. The goal is to show how the exactness condition of Theorem \ref{thm:bary-eval} manifests on these domains, in particular, to articulate the polynonmial space $P$ defined in \eqref{eq:P-def} on which the barycentric evaluation procedure \eqref{eq:map-to-eta} is exact. We will describe $P$ for a given degree index $\bs{k}$, and will also present a special ``isotropic" case when the number of points is the same in every dimension, i.e., when $\bs{k} = (k, k, \ldots, k)$ for some non-negative scalar integer $k$.

\subsubsection{Quadrilaterals}
We consider $d = 2$, with $c = 0$ Duffy maps. In this case, we have $\bs{\eta} = \bs{\xi}$, and both variables take values on $[-1,1]^2$. Then, given degree $\bs{k} = (k_1, k_2)$, the set $A$ in \eqref{thm:bary-eval} corresponds to all multi-indices $\bs{j}$ satisfying $\bs{j} \leq \bs{k}$. Therefore, the polynomial space $P$ in \eqref{eq:P-def} is equal to $Q_{\bs{k}}$,
\begin{align}
\begin{split}
  P = \mathrm{span} \left\{ \xi_1^{j_1} \xi_2^{j_2} \;\big|\; \bs{j} \leq \bs{k} \right\} &= Q_{\bs{k}}, \\
  P = \mathrm{span} \left\{ \xi_1^{j_1} \xi_2^{j_2} \;\big|\; \bs{j} \leq \bs{k} \right\} &= Q_{(k,k)}, \hskip 10pt (k = k_1 = k_2).
\end{split}
\end{align}

\subsubsection{Triangles}
As with quadrilateral elements we have $d = 2$, but we now take $c = 1$, and a Duffy transformation collapse defined by $\bs{a} = 1$, $\bs{b} = 2$. Then, the $\bs{\eta} \leftrightarrow \bs{\xi}$ map is given by 
\begin{align}
  \xi_1 &= \frac{1}{2} \left(1 + \eta_1\right)\left(1 - \eta_2\right), & \xi_2 = \eta_2.
\end{align}
The constraints in the definition of $A$ are given by $\alpha_1 \leq k_1$ and $\alpha_1 + \alpha_2 \leq k_2$, so that the space $P$ is 
\begin{align}
\begin{split}
  P &= \mathrm{span} \left\{ \xi_1^{j_1} \xi_2^{j_2} \;\big|\; j_1 \leq k_1, \;\; j_1 + j_2 \leq k_2 \right\}, \\
  P &= \mathrm{span} \left\{ \xi_1^{j_1} \xi_2^{j_2} \;\big|\; j_1 + j_2 \leq k \right\} = P_{k}, \hskip 10pt (k = k_1 = k_2),
\end{split}
\end{align}
where we have used $P_k$ to denote the set of bivariate polynomials of total degree at most $k$.

\subsubsection{Hexahedrons}
We now move to three dimensions so $d = 3$, and taking $c = 0$ Duffy maps, again implying that $\bs{\xi} = \bs{\eta}$. Therefore, the space $P$ on which the barycentric procedure is exact is $Q_{\bs{k}}$:
\begin{align}
\begin{split}
  P = \mathrm{span} \left\{ \xi_1^{j_1} \xi_2^{j_2} \xi_3^{j_3} \;\big|\; \bs{j} \leq \bs{k} \right\} &= Q_{\bs{k}}, \\
  P = \mathrm{span} \left\{ \xi_1^{j_1} \xi_2^{j_2} \xi_3^{j_3} \;\big|\; j_q \leq k \;\; \forall \;\; q \in [3] \right\} &= Q_{(k,k,k)}, \hskip 10pt k = k_1 = k_2 = k_3.
\end{split}
\end{align}

\subsubsection{Prisms}
As with hexahedral elements we have $d = 3$, but we now take $c = 1$, and a Duffy transformation collapse defined by $\bs{a} = 1$, $\bs{b} = 2$. Then, the $\bs{\eta} \leftrightarrow \bs{\xi}$ map is given by 
\begin{align}
  \xi_1 &= \frac{1}{2} \left(1 + \eta_1\right)\left(1 - \eta_2\right), & \xi_2 &= \eta_2, & \xi_3 &= \eta_3.
\end{align}
The constraints in the definition of $A$ are given by $\alpha_1 \leq k_1$ and $\alpha_1 + \alpha_2 \leq k_2$, so that the space $P$ is 
\begin{align}
\begin{split}
  P &= \mathrm{span} \left\{ \bs{\xi}^{\bs{j}} \;\big|\; j_1 \leq k_1, \;\; j_1 + j_2 \leq k_2, \;\; j_3 \leq k_3 \right\}, \\
  P &= \mathrm{span} \left\{ \bs{\xi}^{\bs{j}} \;\big|\; j_1 + j_2 \leq k, \;\; j_3 \leq k \right\} , \hskip 10pt (k = k_1 = k_2 = k_3).
\end{split}
\end{align}

\subsubsection{Tetrahedrons}
Also with $d = 3$ and $c=2$, a Duffy transformation collapses defined by $\bs{a} = (1, 2)$ and $\bs{b} = (2, 3)$, the $\bs{\eta} \leftrightarrow \bs{\xi}$ map is given by
\begin{align}
  \xi_1 &= \frac{1}{2} \left(1 + \eta_1\right)\left(1 - \frac{1}{2} \left(1 + \eta_2\right)\left(1 - \eta_3\right)\right), & \xi_2 &= \frac{1}{2} \left(1 + \eta_2\right) \left(1 - \eta_3 \right), & \xi_3 &= \eta_3.
\end{align}
The constraints in the definition of $A$ are given by $\alpha_1 \leq k_1$ and $\alpha_1 + \alpha_2 \leq k_2$, and $\alpha_1 + \alpha_2 + \alpha_3 \leq k_3$ so that the space $P$ is 
\begin{align}
\begin{split}
  P &= \mathrm{span} \left\{ \bs{\xi}^{\bs{j}} \;\big|\; j_1 \leq k_1, \;\; j_1 + j_2 \leq k_2, \;\; j_1 + j_2 + j_3 \leq k_3 \right\}, \\
  P &= \mathrm{span} \left\{ \bs{\xi}^{\bs{j}} \;\big|\; j_1 + j_2 + j_3 \leq k, \right\} = P_k, \hskip 10pt (k = k_1 = k_2 = k_3).
\end{split}  
\end{align}
where we have used $P_k$ to denote the set of trivariate polynomials of total degree at most $k$.

\subsubsection{Pyramids}
Finally, we again take $d = 3$ and $c=2$, a Duffy transformation collapses defined by $\bs{a} = (1, 3)$ and $\bs{b} = (2, 3)$. Then, $\bs{\eta} \leftrightarrow \bs{\xi}$ map is given by
\begin{align}
  \xi_1 &= \frac{1}{2} \left(1 + \eta_1\right)\left(1 - \eta_3\right), & \xi_2 &= \frac{1}{2} \left(1 + \eta_2\right) \left(1 - \eta_3 \right), & \xi_3 &= \eta_3.
\end{align}
The constraints in the definition of $A$ are given by $\alpha_1 \leq k_1$ and $\alpha_1 \leq k_2$, and $\alpha_1 + \alpha_2 + \alpha_3 \leq k_3$ so that the space $P$ is 
\begin{align}
\begin{split}
  P &= \mathrm{span} \left\{ \bs{\xi}^{\bs{j}} \;\big|\; j_1 \leq k_1, \;\; j_2 \leq k_2, \;\; j_1 + j_2 + j_3 \leq k_3 \right\}, \\
  P &= \mathrm{span} \left\{ \bs{\xi}^{\bs{j}} \;\big|\; j_1 + j_2 + j_3 \leq k, \right\} = P_k, \hskip 10pt (k = k_1 = k_2 = k_3).
\end{split}
\end{align}

\subsection{Derivatives and gradients}\label{ssec:multid-deriv}
The results of Section \ref{ssec:poly-eval} lead naturally to derivative evaluations. In particular, by defining the function $\hat{p} \coloneqq p \circ D_{\bs{a},\bs{b}}$ in $\bs{\eta}$ space, and writing $p(\bs{\xi}) = \hat{p}(\bs{\eta}(\bs{\xi}))$, we can translate derivatives of $p$ to those of $\hat{p}$, which can be efficiently evaluated using the results from previous sections.

Using the chain rule, the gradient of $p$ can be written in terms of the gradient of $\hat{p}$,
\begin{align}
  \nabla_{\bs{\xi}} p\left(\bs{\xi}\right) = \frac{\mathrm{D} \bs{\eta}}{\mathrm{D} \bs{\xi}} \nabla_{\bs{\eta}} \hat{p}\left(\bs{\eta}\right),
\end{align}
where $\nabla_{\bs{\eta}}$ is the standard $d$-variate gradient operator with respect to the Euclidean variables $\bs{\eta}$, and we have defined the $d \times d$ Jacobian matrix of the $\bs{\xi} \mapsto \bs{\eta}$ map,
\begin{align}
  \left( \frac{\mathrm{D} \bs{\eta}}{\mathrm{D} \bs{\xi}} \right)_{i,j} = \left( \frac{\mathrm{D} D^{-1}_{\bs{a},\bs{b}}(\bs{\xi})}{\mathrm{D} \bs{\xi}} \right)_{i,j} = \pfpx{\eta_i}{\xi_j}.
\end{align}
Note that the individual Duffy maps $D_{a,b}$ defined in \eqref{eq:duffy} that collapse dimension $a$ onto dimension $b$ are invertible whenever $\eta_b \neq 1$, so that the Jacobian above is well defined away from these points. The formula above shows that since the gradient of $\hat{p}$ can be evaluated efficiently through the procedures in Section \ref{ssec:multid-deriv}, so, too, can the gradient of $p$. In particular, this procedure exactly evaluates gradients (away from singularities of the Duffy transformation) if $p \in P(A)$ where $P(A)$ is given in \eqref{eq:P-def}.

Similarly, components of the Hessian of $p$ can be evaluated as,
\begin{align}
  \pnfpx{p}{\xi_i \partial \xi_j}{2} = \left(\pnfpx{\bs{\eta}}{\xi_i \partial \xi_j}{2}\right)^T \nabla_{\bs{\eta}} \hat{p} + \left( \pfpx{\bs{\eta}}{\xi_i} \right)^T \bs{H}_{\bs{\eta}}(\hat{p}) \left( \pfpx{\bs{\eta}}{\xi_j} \right),
\end{align}
where $\pnfpx{\bs{\eta}}{\xi_i \xi_j}{2} \in \R^d$ and $\pfpx{\bs{\eta}}{\xi_i} \in \R^d$ are componentwise derivatives, and $\bs{H}_{\bs{\eta}}(\hat{p})$ is the $d \times d$ Hessian of $\hat{p}$. Again, since the Hessian of $\widehat{p}$ can be efficiently evaluated through the procedures in Section \ref{ssec:multid-deriv}, the Hessian of $p$ also inherits this asypmtotic efficiency.  This procedure again exactly evaluates Hessians (away from singularities of the Duffy transformation) if $p \in P(A)$. If $p \in P(A)$, higher-order derivatives of $p$ may likewise be computed exactly from those of $\hat{p}$ using Fa\`{a} di Bruno's formula with $\mathcal{O}\left(\prod_{j=1}^d k_j\right)$ complexity stemming from the multivarite barycentric procedures described earlier.

\section{Algorithmic and implementation details}
\label{sec:implementation}

In this section, we present the implementation details of the barycentric Lagrange interpolation in terms of the data structures and algorithms involved. The implementation follows the high-level algorithms described in Sections \ref{sec:bary-tensor} and \ref{sec:duffy}, but some details differ in service of computational routine optimization. The implementation of these concepts can be accessed in the open-source spectral/{$hp$} element library {\em Nektar++} \cite{Cantwell2015,MOXEY2020107110}.

\subsection{Algorithm}\label{ssec:algorithm}
The foundation of the implementation is in the kernel that performs the barycentric interpolation itself as given in eq.~\eqref{eq:2} -- that is, it takes the coordinate of a single arbitrary point and the stored physical polynomial values at each quadrature point in the expansion and returns the interpolated value at the arbitrary point. This kernel has been templated to perform the interpolation only in a specific direction based on the integer template parameter \texttt{DIR}, and also to return the derivative value and second-derivative value by the reference parameter based on the boolean template parameters \texttt{DERIV}, and \texttt{DERIV2}. Templating here is defined in the sense of \Cpp ~templates; i.e. that these expressions are evaluated at compile time to reduce branching overheads and enable compiler inlining. For example, when \texttt{DERIV} and \texttt{DERIV2} are not required, setting these template variables to {\tt false} allows for performance gains by removing the {\tt if} branch tests from the generated object code. The reasoning behind this unifying of the physical value evaluation and derivative interpolations is that we can make use of terms computed in the physical evaluation in the derivative interpolations saving repeat calculations (cf. \eqref{eq:barydiff} and \eqref{eq:barydiff2}). An example kernel for physical, first- and second-derivative values is shown in Algorithm \ref{alg:baryEval}. 

The next important method is the tensor-product function, which constructs the tensor line/square by calling the barycentric interpolation kernel on quadrature points, and is therefore dimension dependent and operates on the reference element in the appropriate form. The 1D version  performs the barycentric interpolation directly on the provided point and returns the physical, first- and second- derivative value in the $\xi_{1}$ direction. In 2D and 3D, we chose to implement only the first-derivative to reduce overall complexity of the tensor-product method. The 2D version constructs an interpolation in the $\xi_{1}$ direction to give an intermediate step of physical values and derivative values at the expansion quadrature points in the same $\xi_{1}$ direction. The quadrature derivative values in the $\xi_{1}$ direction can then be evaluated in the $\xi_{2}$ direction to produce the single derivative value in the $\xi_{1}$ direction at the point provided. Likewise, the quadrature physical values in the $\xi_{1}$ direction can then be evaluated in the $\xi_{2}$ direction with the derivative output enabled to return both the single derivative value in $\xi_{2}$ direction and the physical value at the provided point. The tensor product in 3D is similar, except we now also consider the $\xi_{3}$ direction and so our intermediary steps consist of constructing the tensor square. Structuring it in this manner allows for a minimum number of calls to the barycentric interpolation kernel. An example tensor product function in 2D is shown in Algorithm \ref{alg:baryTensorDeriv}.

As this tensor-product function operates on the reference element that in 2D is a quadrilateral, and in 3D a hexahedron, additionally, it can be extended to non-reference shape types by collapsing coordinates and performing the correct quadrature point mapping as described in Section~\ref{ssec:collapsed}. This is achieved by overriding the existing reference element interpolation function, which evaluates the expansion at a single (arbitrary) point of the domain to also give it the capabilities to evaluate the derivative in each direction as needed. This function is a wrapper around a virtual function that is defined for each shape type and therefore allows for the shape dependent coordinate collapsing. Example structures of these functions for a triangular shape type are shown in Algorithms \ref{alg:collTri} and \ref{alg:physEval}.

\begin{algorithm}
    \caption{Example kernel for the Barycentric interpolation of a single point to provide physical and first-derivative values dependent on the template parameters provided. The $\langle\ldots\rangle$ indicates template arguments, $(\ldots)$ indicates normal arguments, and $\bullet$ is a matrix-vector multiplication operation.}
    \begin{algorithmic}[1]
        \ProcedureTemplate{BaryEvaluate}{dir, deriv = false, deriv2 = false}{$\eta$, $p$}
        \State{$A = 0, B = 0, C = 0, D = 0, E = 0, F = 0$}
        \For{each quadrature point, $z_{\textsc{dir}}$}
            \State $x = z_{\textsc{dir}} - \eta$
            \If{$x = 0$}
                \State $p_{\eta_{\textsc{dir}}} = p_{z_{\textsc{dir}}}$
                \If{\textsc{deriv2}}
                    \State $\frac{dp}{d\eta_{\textsc{dir}}} = \bm{D}_{z_{\textsc{dir}}} \bullet p$  \Comment{Use the precomputed derivative matrix, $\bm{D}_{z}$}
                    \State $\frac{d^{2}p}{d\eta_{\textsc{dir}}^{2}} = \bm{D}^{2}_{z_{\textsc{dir}}} \bullet p$  \Comment{Use the precomputed 2nd derivative matrix, $\bm{D}^{2}_{z}$}
                    \State $\bm{out} \gets p_{\eta_{\textsc{dir}}}, \frac{dp}{d\eta_{\textsc{dir}}}, \frac{d^{2}p}{d\eta_{\textsc{dir}}^{2}}$
                \ElsIf{\textsc{deriv}}
                    \State $\frac{dp}{d\eta_{\textsc{dir}}} = \bm{D}_{z_{\textsc{dir}}} \bullet p$  \Comment{Use the precomputed derivative matrix, $\bm{D}_{z}$}
                    \State $\bm{out} \gets p_{\eta_{\textsc{dir}}}, \frac{dp}{d\eta_{\textsc{dir}}}$
                \Else
                    \State $\bm{out} \gets p_{\eta_{\textsc{dir}}}$
                \EndIf
            \EndIf
            \State $t_{1} = w_{z_{\textsc{dir}}} / x$ \label{step:t1}
            \State $A = A + t_{1} * p_{z_{\textsc{dir}}}$
            \State $F = F + t_{1}$
            \If{\textsc{deriv} \textbf{or} \textsc{deriv2}}
                \State $t_{2} = t_{1} / x$
                \State $B = B + t_{2} *  p_{z_{\textsc{dir}}}$\label{step:B}
                \State $C = C + t_{2}$ \label{step:C}
                \If{\textsc{deriv2}}
                    \State $t_{3} = t_{2} / x$
                    \State $D = D + t_{3} *  p_{z_{\textsc{dir}}}$\label{step:D}
                    \State $E = C + t_{3}$ \label{step:E}
                \EndIf
            \EndIf
        \EndFor
        \State $p_{\eta_{\textsc{dir}}} = A / F$ \label{step:p1}
            \If{\textsc{deriv} \textbf{or} \textsc{deriv2}}
                \State $FF = F*F$
                \State $AC = A*C$
                \State $\frac{dp}{d\eta_{\textsc{dir}}} = (B * F - AC) / FF$ \label{step:dp1}
                \If{\textsc{deriv2}}
                    \State $\frac{d^{2}p}{d\eta_{\textsc{dir}}^{2}} = (2*D)/F - (2*E*A)/FF - (2*B*C)/FF + (2*C*AC)/(FF*F)$
                     \State $\bm{out} \gets p_{\eta_{\textsc{dir}}}, \frac{dp}{d\eta_{\textsc{dir}}}, \frac{d^{2}p}{d\eta_{\textsc{dir}}^{2}}$
                \EndIf
            \State $\bm{out} \gets p_{\eta_{\textsc{dir}}}, \frac{dp}{d\eta_{\textsc{dir}}}$
        \Else
            \State $\bm{out} \gets p_{\eta_{\textsc{dir}}}$
        \EndIf
        \EndProcedureTemplate
    \end{algorithmic}
    \label{alg:baryEval}
\end{algorithm}

\begin{algorithm}
    \caption{Sum factorization using the \texttt{BaryEvaluate} function for 2D shape types to give physical and first-derivative values. The variables $phys0$ and $deriv0$ refer to arrays which are populated with the physical and derivative values respectively, taken at each line of points denoted by index $i$ in the $\xi_{1}$ direction, as described in Section \ref{ssec:algorithm}.}
    \begin{algorithmic}[1]
        \Procedure{BaryTensorDeriv}{$\eta$, $p$}
        
        \For{each quadrature point in $\eta_2$ direction, $x_{2}$}
        		\State{$phys0[i], deriv0[i] \gets\textsc{BaryEvaluate} \langle 0, true \rangle(\eta_{1}, p_{\left(i \times k_{1}\right)})$}
        \EndFor
         \State{$\frac{dp}{d\eta_{1}} \gets\textsc{BaryEvaluate} \langle 1 \rangle(\eta_{2}, deriv0[0])$}
         \State{$p_{\eta}, \frac{dp}{d\eta_{2}} \gets\textsc{BaryEvaluate} \langle 1,true \rangle(\eta_{2}, phys0[0])$}
         \State{$\bm{out} \gets p_{\eta}, \frac{dp}{d\xi_{1}}, \frac{dp}{d\xi_{2}}$ }   
        \EndProcedure
    \end{algorithmic}
    \label{alg:baryTensorDeriv}
\end{algorithm}

\begin{algorithm}
    \caption{Coordinate collapsing for a triangle.}
    \begin{algorithmic}[1]
        \Procedure{CollapseCoords}{$\xi$}
            \If{$\xi_{y} = 1$}
                \State $\eta_{1} = -1$
                \State $\eta_{2} =1$
            \Else
                \State $\eta_{1} = 2*(1+\xi_{1}) / (1 - \xi_{2}) - 1$
                \State$\eta_{2} = \xi_{2}$
            \EndIf
            \State $\bm{out} \gets \eta$
        \EndProcedure
    \end{algorithmic}
    \label{alg:collTri}
\end{algorithm}

\begin{algorithm}
    \caption{Overview of the Barycentric solution and first-derivative evaluation for a triangle.}
    \begin{algorithmic}[1] 
        \Procedure{PhysEvaluate}{$\xi$, $p$}
            \State{$\eta \gets$\Call{CollapseCoords}{$\xi$}}
            \State $p_{\eta}, \frac{dp}{d\eta_{1}}, \frac{dp}{d\eta_{2}} \gets$ \Call{BaryTensorDeriv}{$\eta$, $p$}
            \State set up geometric factor for x derivative $G_{1} = 2 / (1 - \eta_{2})$
            \State set up geometric factor for y derivative $G_{2} = G_{1} * (\eta_{1} + 1) / 2$
            \State $\frac{dp}{d\xi_{1}}$ = $\frac{dp}{d\eta_{1}} * G_{1}$
            \State $\frac{dp}{d\xi_{2}} = \frac{dp}{d\eta_{2}} + G_{2} * \frac{dp}{d\eta_{1}}$
            \State $\bm{out} \gets p_{\xi}, \frac{dp}{d\xi_{1}}, \frac{dp}{d\xi_{2}}$
        \EndProcedure
    \end{algorithmic}
    \label{alg:physEval}
\end{algorithm}

\subsection{Complexity analysis}

Given a function $p(\eta_{\texttt{DIR}})$ evaluated at $k+1$ quadrature points $Q = \{z_0, z_1, \cdots, z_{k}\}_{\texttt{DIR}}$, we perform the following steps to find the interpolated values of $p(\eta),$ and  $\frac{\partial p }{\partial \eta_{\texttt{DIR}}}$ at a given point $\eta \notin Q$:\\

\begin{enumerate}[label=(\alph*)]
    \item Calculate and store the weights $\{w_j\}, \forall j = 0, 1, \cdots k$ as per \eqref{eq:1}, which requires storage of size $k+1$. The number of flops for this operation is $(k+1)^2 +1$, and the complexity is $O(k^2)$, which is a one-time setup cost.
    \item Calculate $p(\eta)$ using step \ref{step:p1} of Algorithm \ref{alg:baryEval}, for which we use the pre-computed weights $w$ from previous step and calculate the terms $A$ and $F$. The storage requirement for each of these terms is of size $k+1$, and the number of flops required to calculate them is  $3(k+1)$ and $2(k+1)$, respectively. (If we reuse the term $t_1$ from step \ref{step:t1}, calculating $F$ needs only $k+1$ flops). Thus, the total complexity of finding $p(\eta)$ using the barycentric method is $O(k)$, which is consistent with the evaluation in \cite{Trefethen04}.
 
    \item 
Calculate $\frac{\partial}{\partial \eta_{\texttt{DIR}}}p(\eta)$ as shown in step \ref{step:dp1} of Algorithm \ref{alg:baryEval} which uses the precomputed weights $w$ and the terms $A$ and $F$ from the previous step. Additional terms $B$ and $C$  are evaluated as per steps \ref{step:B} and \ref{step:C} of Algorithm \ref{alg:baryEval}. The storage requirement for these terms is of size $k+1$ each. The calculation of term $B$ requires $4(k+1)$ flops (or $3(k+1)$ using precomputed $t_1$). Similarly, calculating term $C$ requires $2(k+1)$ flops (or $k+1$ if we consider precomputed $t_2$). Therefore, the total complexity of evaluating $\frac{\partial }{\partial \eta_{\texttt{DIR}}}p(\eta)$ is $O(k)$.

\item 
Applying a similar analysis for $\frac{d^2}{d\eta^2_{\texttt{DIR}}}p(\eta)$, we need to evaluate additional terms $D$ and $E$ as shown in steps \ref{step:D} and \ref{step:E} of Algorithm \ref{alg:baryEval}. We need additional storage of size $k+1$ for each of these terms. The computational complexity for both $D$ and $E$ is $O(k)$.
 
%
%

\end{enumerate}

Note that the analysis presented above is independent of dimensions (\texttt{DIR}). For higher dimensions, we follow the same procedure in each individual direction. For example, in 2D we require evaluation of $p$ for $\texttt{DIR}=1$ and $\texttt{DIR} = 2$. Therefore, when $\texttt{DIR} = 2$, the algorithm takes twice the amount of calculations as 1D. Thus, the complexity of the evaluation is still $O(k)$. Similarly, for the first-derivative, we need the individual evaluations $\partial p/\partial \eta_0$ and  $\partial p/\partial \eta_1$. Therefore, the computational complexity of the derivative evaluation is $O(k)$. By extension, the computational complexity for the second-derivative is also $O(k)$.
 
%
\section{Evaluation and comparison}
\label{sec:results}
\subsection{Baseline Evaluations}
To investigate how this implementation of the barycentric interpolation affects the efficiency of evaluation for physical and derivative values, a number of tests were run across various cases. The barycentric interpolation method has been implemented within the {\em Nektar++} spectral/$hp$ element framework~\cite{Cantwell2015,MOXEY2020107110} in a discontinuous Galerkin (DG) setting and is compared with two variants of the already existing standard Lagrange interpolation method, the first where the interpolation matrix is recalculated every iteration, and the second where the interpolation matrix is stored across iterations, mimicking a scenario involving history points. All test cases below were carried out on a single core of a dual-socket Intel Xeon Gold 5120 system, equipped with 256GB of RAM, with the solver pinned to a specific core in order to reduce the influence of kernel core and socket reassignment mid-process.
\subsubsection{Construction of the baseline tests}
These baseline tests are constructed for the desired elemental shape using the hierarchical modified basis of Karniadakis \& Sherwin~\cite{Karniadakis2005} of order $P$  with tensor products of $P+2$ points in each direction. We make use of Gauss-Lobatto-Legendre points in noncollapsed directions and Gauss-Radau points in collapsed directions to avoid evaluation at singularities. The physical values at these points are provided by the polynomial $p(\bs{\xi}) = \xi_{1}^{2} + \xi_{2}^{2} - \xi_{3}^{2}$, which also allows for an analytical solution at any $\bs{\xi}$ for the physical and derivative values in each direction. The physical and derivative values are sampled on the constructed shape on a collocation grid that is again constructed as GLL/GLR points, like the quadrature rule. However, we use a fixed collocation grid size while varying order $P$, so that we ensure that the collocation grid is distinct from the quadrature rule in most cases. To ensure the same number of points is being sampled for all shape dimensions, we choose to use 64 total points because of the symmetry so that in 1D, it is $64^1$, 2D it is $8^2$, and 3D it is $4^3$. This creates some special considerations when the collocation grid matches exactly with the quadrature points used within the shape, which we discuss in the relevant sections below. We average the timings, in 1D from $10^6$ evaluations, and in 2D/3D from $10^5$ evaluations, to ensure results are not affected by system noise or other external factors. The tests are performed for a range of basis orders from $2$ to $20$. In 1D, we calculate the physical, first and second derivative values, whereas in 2D and 3D, we calculate only the physical and first derivative values.
\subsubsection{1D barycentric interpolation and derivatives}
Figure~\ref{fig:seg} shows that recalculating the interpolation matrix every cycle is the notably slower of the three methods, whereas the barycentric interpolation and stored interpolation matrix method are closer in performance with the barycentric interpolation being on average across the orders $33\%$ slower for the solution evaluation only, $20\%$ slower when including first derivatives, and $18\%$ slower when including second derivatives. However, the barycentric interpolation wins in terms of the storage complexity.  This is because the former requires $\mathcal{O}(k)$ storage to store the weights $w_j$, where $k$ and $z_j$ are given and fixed.  On the other hand, the stored interpolation matrix has the best case space complexity of $\mathcal{O}(k^2)$. An interesting feature present is the minor reductions in interpolation time for the stored matrix method at order $3$, $8$, $13$, and $18$. These basis orders correspond to the number of quadrature points being a multiple of five, which we theorize is the line cache size of the CPU being used. A match of this cache size with the quadrature point array sizes in our implementation will result in memory optimizations for the interpolation matrix multiplications. This phenomena will also be present for the recalculated matrix method, however, the result of optimization in this case is not visible on the graph due to the larger time scale. It can be seen that the baseline computational cost increases moving from interpolating the physical values only (Fig.~\ref{fig:seg}a) to also including the first derivatives (Fig.~\ref{fig:seg}b), and then the second derivatives (Fig.~\ref{fig:seg}c) for all methods.
\begin{figure}[H] 
         \centering
         \includegraphics[width=\textwidth]{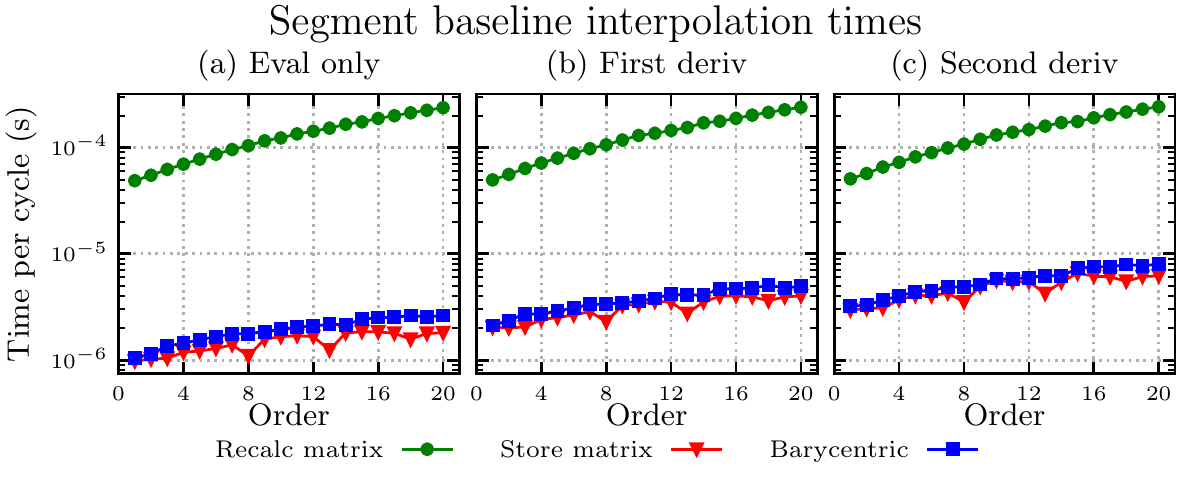}
         \caption{Baseline interpolation timings for a segment. (a) Only physical values, (b) Physical and first derivative values, (c) Physical and first-, and second-derivative values.}
         \label{fig:seg}
\end{figure}
\subsubsection{Extension to traditional tensor-product expansions}
For the traditional tensor-product expansions, we now consider a quadrilateral element in 2D and a hexahedron in 3D. Figure~\ref{fig:quad} shows the results for the quadrilateral element. Generally results are similar to that for the segment. An obvious unique feature is the spike in the recalculated matrix timing result at order $6$. The spike corresponds to $8$ quadrature points in each direction, which is the same as the number we are sampling on, and therefore the points are collocated. Consequently, the routine in which the interpolation matrix is constructed has to handle this collocation, which results in the increased cost. We can also see the the barycentric interpolation method handling this collocation, resulting in a speed-up compared to neighboring orders, evident in Figure~\ref{fig:quad}a. On average the barycentric interpolation method is $30\%$ slower across the orders for the solution evaluation only when compared to the stored interpolation matrix method. Figure~\ref{fig:quad}b including the first derivative evaluations shows that the barycentric interpolation method is on average $15\%$ faster across the orders than the stored matrix variant of the Lagrangian method indicating the computational cost savings from unifying the derivative call.
\begin{figure}[H] 
         \centering
         \includegraphics[width=\textwidth]{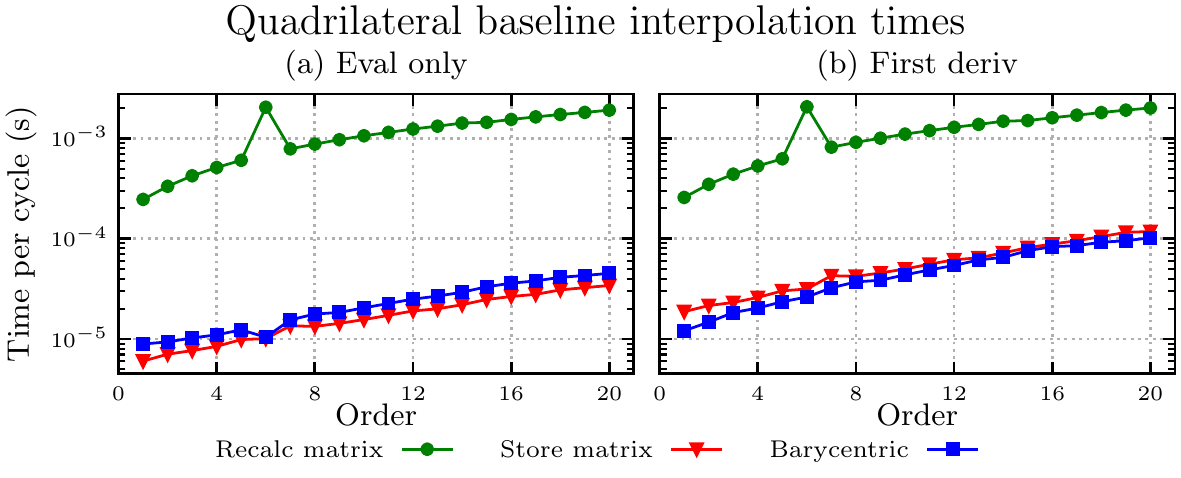}
         \caption{Baseline interpolation timings for a quadrilateral. (a) Only physical values, (b) Physical and first-derivative values.}
         \label{fig:quad}
\end{figure}
The same collocation trend is present in the hexahedral element, shown in Figure~\ref{fig:hex} with the spike now present at order $2$, which corresponds with the $4$ quadrature points in each direction. The trends are similar again to the 1D and 2D results. On average across the orders for the the solution evaluation only the barycentric interpolation method is $48\%$ slower than the stored matrix interpolation method. The most notable difference compared to the 2D results is in the first derivative timings (Fig.~\ref{fig:hex}b), which when disregarding order $2$ demonstrates the barycentric interpolation method is on average $10\%$ slower at orders $\leq11$, while at orders  $>11$ it is on average $9\%$ faster.
\begin{figure}[h]
         \centering
         \includegraphics[width=\textwidth]{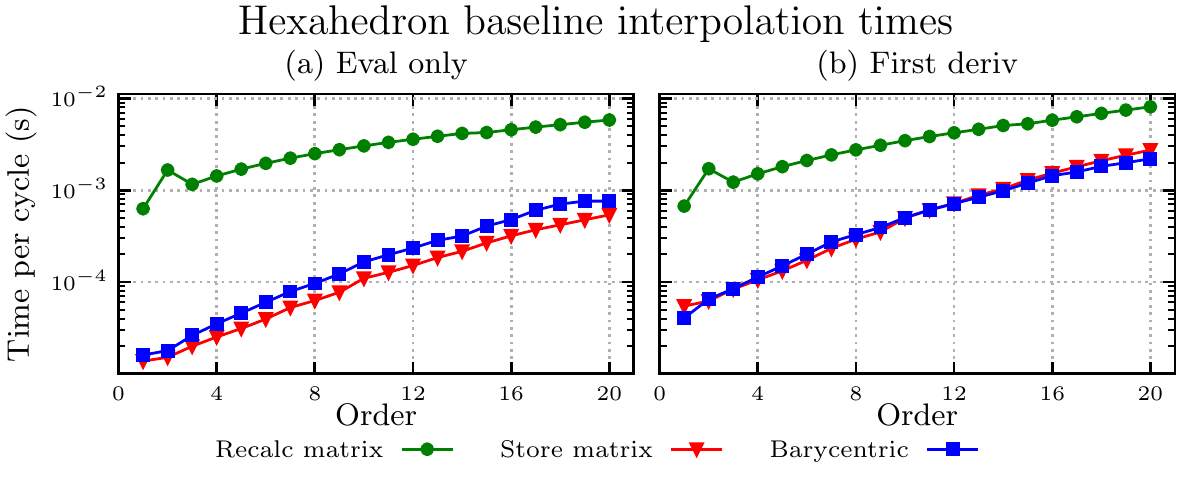}
         \caption{Baseline interpolation timings for a hexahedron. (a) Only physical values, (b) Physical and first derivative values.}
         \label{fig:hex}
\end{figure}
\subsubsection{Extension to general expansions}
We now compare the most complicated of the available shape types in two and three dimensions, the triangle and the tetrahedron, which require the use of Duffy transformations. The results shown in Figures~\ref{fig:tri} and \ref{fig:tet} align closely with their tensor-product expansion counterparts, the quadrilateral and hexahedron, respectively. A unique feature can now be seen in the recalculated matrix interpolation method that appears to show a odd/even cyclical trend. We believe this is again due to collocated points, this time as a consequence of the collapsing of the element and the routine used to calculate the interpolation matrix making use of a floor function.
\begin{figure}[H] 
         \centering
         \includegraphics[width=\textwidth]{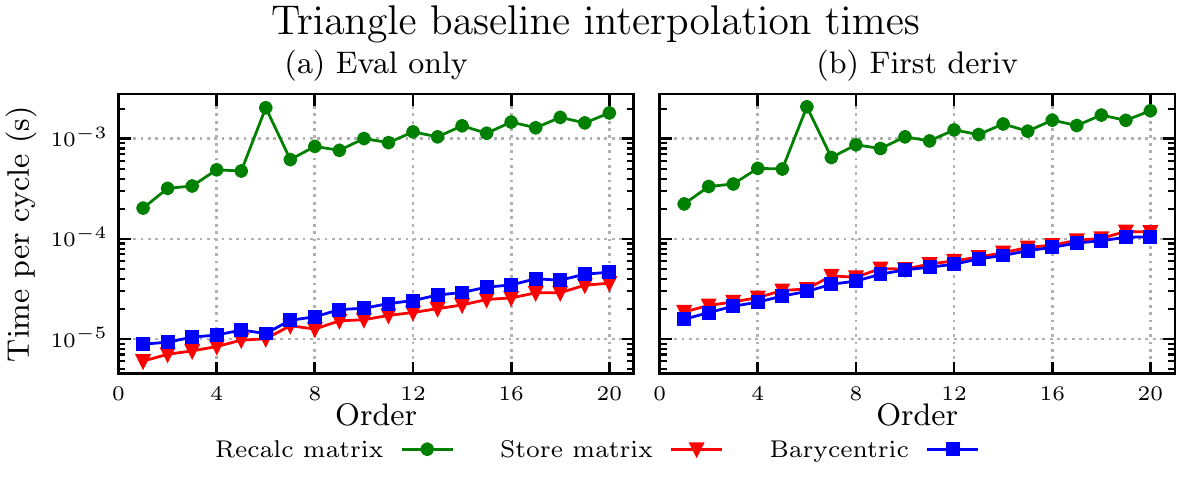}
         \caption{Baseline interpolation timings for a triangle: (a) Only physical values and (b) Physical and first-derivative values.}
         \label{fig:tri}
\end{figure}
\begin{figure}[H]
         \centering
         \includegraphics[width=\textwidth]{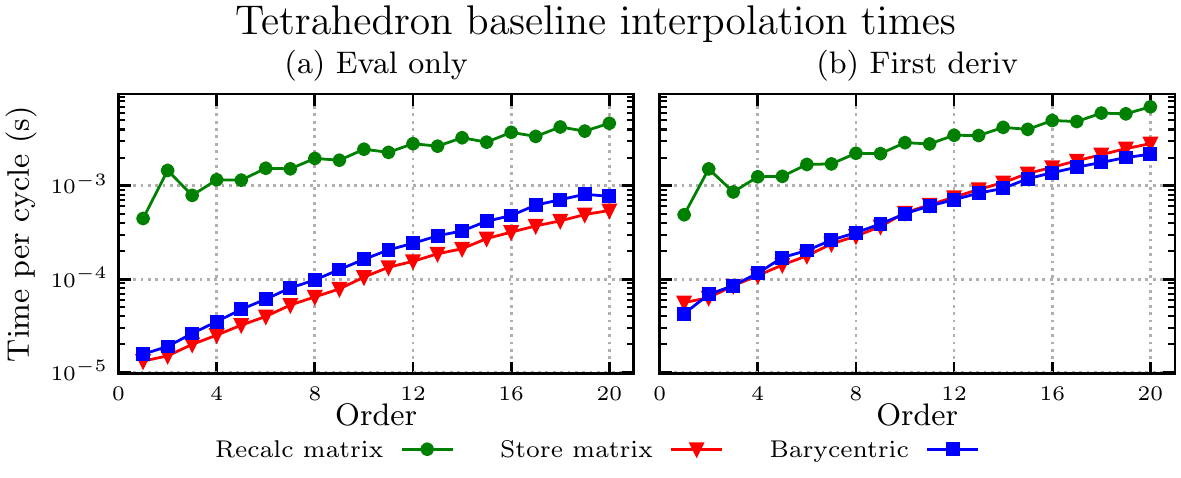}
         \caption{Baseline interpolation timings for a tetrahedron: (a) Only physical values and (b) Physical and first-derivative values.}
         \label{fig:tet}
\end{figure}
\subsubsection{Speed-up factor}
To further compare the methods, Figure~\ref{fig:factors} shows the speed-up factor when going from the recalculated matrix variant of the Lagrangian interpolation method to the barycentric interpolation method for segments, quadrilaterals, and hexahedrons. This shows that in 1D as the order increases the speed-up increases, for 2D it stays approximately the same, and for 3D it decreases. We can see that including the first-derivatives (Fig.~\ref{fig:factors}b) in 1D causes the speedup factor to reduce when compared with the evaluation only version (Fig.~\ref{fig:factors}). In 2D, the speed-up factor remains approximately consistent between the two versions, and in 3D it increases. A minimum speedup factor of approximately $7$ is observed across all tests occurring in hexahedrons greater than order $17$ when calculating the solution evaluation only.
\begin{figure}[H]
         \centering
         \includegraphics[width=\textwidth]{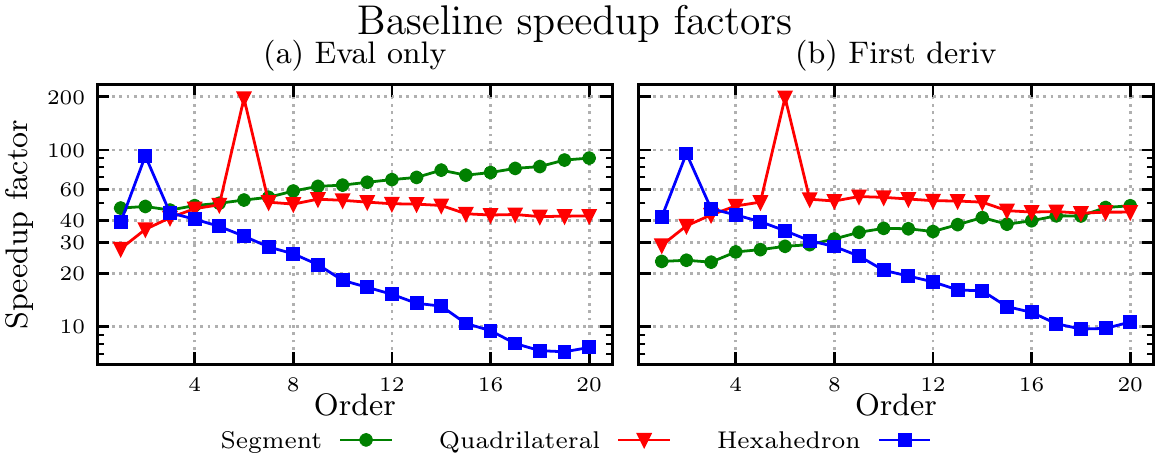}
         \caption{Speed-up factors calculated for the segment, quadrilateral, and hexahedron: (a) Only physical values and (b) Physical and first-derivative values.}
         \label{fig:factors}
\end{figure}
\subsection{Real-world usage example}
To investigate the performance of the barycentric interpolation method in a less artificial setting, we now consider a real world problem containing a nonconformal interface, again posed in a DG setting within the Nektar++ spectral/$hp$ element framework. In general we can imagine two scenarios: the first in which the nonconformal interface is fixed in time, and the second where the interface changes at each timestep to account for e.g. a grid rotation or translation. We investigate both settings in this section.
\subsubsection{Handling the nonconformal interface}
To handle the transfer of information across a nonconformal interface we adopt a point-to-point interpolation approach as outlined in \cite{Laughton21}, which involves minimizing an objective function to find an arbitrary point on a curved element edge utilizing the inverse of a parametric mapping to the reference element. In our implementation, this minimization problem is solved via a gradient-descent method utilizing a quasi-Newton search direction and backtracking line search that makes use of repeated calls to determine the physical, first- and second-derivative values within the loop. Once calculated and for a stationary interface the location of this arbitrary point in the reference element can be cached; however, to mimic a moving interface, where the minimization routine must be run every timestep, we disable this caching in order to also evaluate the performance impact of the new barycentric interpolation method. This is the equivalent of the comparison to the first Lagrange interpolation method discussed above, where the interpolation matrix is recalculated every iteration.
\subsubsection{Test case}
We select a standard linear transport equation $u_t + \nabla \cdot \bs{F}(u) = 0$ within a domain $\Omega=[-1,1]^2$, so that $\bm{F}(u) = \bm{v}u$ for a constant velocity $\bm{v} = (1,0)$, and an initial condition that is nonpolynomial, so that $\bm{u}(\bm{x},0) = \sin(2\pi x)\cos(2\pi y)$. The domain consists of a single nonconformal interface with unstructured quadrilateral subdomains on either side, as visualized in Figure~\ref{fig:domain} together with the initial condition for $u$. This means that the interpolation is being performed on the trace edges of the elements located at the nonconformal interface, which in this 2D example are segments. A polynomial order of $P=8$ is considered, and we select $Q=P+2 = 10$ quadrature points in each coordinate direction. We select a timestep size of $\Delta t = 10^{-3}$ and time for 10 cycles (i.e., $t=10$), which is the equivalent of $10^4$ timesteps.
\begin{figure}[H]
    \centering
        \includegraphics[width=0.6\textwidth]{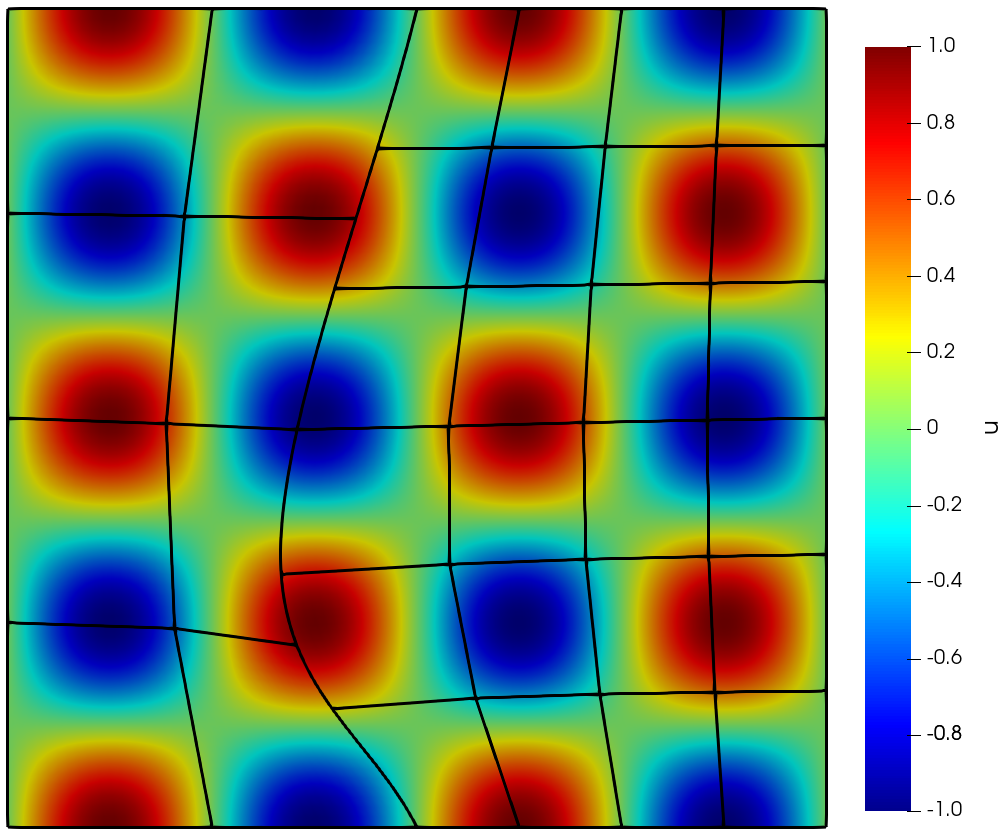}
        \caption{The nonconformal mesh for the real world usage example with the initial projection of the $u$ field overlaid.}
        \label{fig:domain}
\end{figure}
\begin{table}[H]
\sisetup{round-mode=places, round-precision=3, table-format=1.3e-1, scientific-notation = true}
\centering
\caption{Timings for the real world case using the Barycentric and Lagrangian method.}
\label{tab:timings}
\begin{tabular}{@{}lSSS@{}}
\toprule
             			& \multicolumn{3}{c}{Avg.  cost per timestep (s)}                            \\ \cmidrule(l){2-4} 
 Method     	   			 & {Cached} 		        & {Noncached}		& {Minimization}        	 \\ \midrule
Lagrangian      				& 0.00162059 			& 0.021594		&0.01997341	 \\
Barycentric            			& 0.00163341			& 0.00483588		&0.00320247	 \\
\midrule
\end{tabular}
\end{table}
We initially obtain a baseline time for both interpolation methods with the cache enabled. The cache is then disabled and both methods run again, allowing us to compare the cached (static) version with the non-cached (moving) version as a demonstration of the high computational cost incurred by calling this minimization routine every timestep. The results for the cached and noncached versions are shown in Table~\ref{tab:timings}. This demonstrates that with the cache enabled, the timings for both methods are practically identical because the minimization occurs only in the first timestep which incurs a negligible cost over this timescale. However, the non-cached results (where the minimization procedure is run every timestep) shows a slowdown of around $13\times$ for the Lagrangian method, but only $3\times$ for the barycentric method when compared with the cached results. We can then calculate the performance impact of these methods only on the minimization routine, which shows the routine using the barycentric method as around $6\times$ faster than the equivalent routine using the Lagrangian method. This is a significant speed-up, as the minimization routine accounts for a large proportion of the total computational time: for the Lagrangian method, this routine occupies 92\% of total time whereas using the barycentric approach reduces this to 66\%. The speed-up is realized in the total time taken for all $10^4$ timesteps being reduced from $216$s to $48$s.
%


%
\section{Conclusions}
\label{sec:conclusions}

In the context of spectral/$hp$ and high-order finite elements, solution expansion evaluation at arbitrary points in the domain has been a core capability needed for postprocessing operations such as visualization (streamlines/streaklines and isosurfaces) as well for interfacing methods such as mortaring. The process of evaluation of a high-order expansion at an arbitrary point in the domain consists of two parts:  determining in which particular element the point lies, and evaluating the expansion within that element. This work focuses on efficient solution expansion evaluation at arbitrary points within an element. We expand barycentric interpolation techniques developed on an interval to 2D (triangles and quadrilaterals) and 3D (tetrahedra, prisms, pyramids, and hexahedra) spectral/$hp$ element methods. We provided efficient algorithms for their implementations, and demonstrate their effectiveness using the spectral/$hp$ element library {\em Nektar++}. The barycentric method shows a minimum speedup factor of $7$ when compared with the non-cached interpolation matrix version of the Lagrangian method across all tests demonstrating a good performance uplift, culminating in an approximately $6\times$ computational time speedup for the real-world example of advection across a nonconformal interface. In the artificial tests the barycentric method exhibits slightly worse performance than the stored interpolation matrix version of the Lagrangian method when evaluating purely physical values, with slowdowns of between $10\%$ and $50\%$ across all orders dependant on element type. However if first derivatives are also required the barycentric method can outperform the stored interpolation matrix method by up to $35\%$.

\newpage
\bibliographystyle{spmpsci}
\bibliography{references}

\end{document}